\documentclass{amsart}
\usepackage{amsmath}
\usepackage{amsfonts}
\usepackage{amsthm}
\usepackage{amssymb}
\usepackage[all]{xy}
\usepackage{color}
\usepackage{eucal}
\SelectTips{cm}{}

\DeclareMathOperator{\Ho}{Ho}

\DeclareMathOperator{\Hom}{Hom}

\DeclareMathOperator{\sset}{sSet}
\DeclareMathOperator{\Ch}{Ch}

\DeclareMathOperator{\Sp}{Sp}
\DeclareMathOperator{\map}{map}
\DeclareMathOperator{\Map}{Map}
\DeclareMathOperator{\holim}{holim}
\DeclareMathOperator{\hocolim}{hocolim}
\DeclareMathOperator{\colim}{colim}

\newcommand{\C}{\mathcal{C}}
\newcommand{\D}{\mathcal{D}}
\newcommand{\E}{\mathcal{E}}
\newcommand{\G}{\mathcal{G}}

\newcommand{\V}{\mathcal{V}}

\newcommand{\K}{\mathcal{K}}
\newcommand{\calS}{\mathcal{S}}

\theoremstyle{plain}
\newtheorem{theorem}{Theorem}[section]
\newtheorem{proposition}[theorem]{Proposition}
\newtheorem{corollary}[theorem]{Corollary}
\newtheorem{lemma}[theorem]{Lemma}

\newtheorem*{theorem*}{Theorem}

\theoremstyle{definition}
\newtheorem{definition}[theorem]{Definition}

\theoremstyle{remark}
\newtheorem{ex}[theorem]{Example}

\newtheorem{rmk}[theorem]{Remark}

\xyoption{all}

\begin{document}

\title{Bousfield localisations along Quillen bifunctors}
\author[J.J. Guti\'errez]{Javier J. Guti\'errez}
\address{Radboud Universiteit Nijmegen, Institute for
Mathematics, Astrophysics, and Particle Physics, Heyendaalseweg 135, 6525 AJ
Nijmegen, The Netherlands} 
\curraddr{Departament de Matem\`atiques i Inform\`atica,
Facultat de Matem\`atiques i Inform\`atica,
Universitat de Barcelona,
Gran Via de les Corts Catalanes 585,
08007 Barcelona, Spain}
\email{j.gutierrez@math.ru.nl}
\urladdr{http://www.math.ru.nl/~gutierrez}
\author[C. Roitzheim]{Constanze Roitzheim}
\address{University of Kent, School of Mathematics, Statistics and Actuarial
Science, Canterbury, Kent CT2 7NF, United Kingdom}
\email{C.Roitzheim@kent.ac.uk}
\urladdr{http://www.kent.ac.uk/smsas/personal/csrr}
\thanks{The first author was supported by the NWO (SPI 61-638) and the MEC-FEDER grants MTM2010-15831 and MTM2013-42178-P.
Both authors received support from the LMS Scheme~4 grant no. ~41360.}
\keywords{Localisation; model category; Postnikov tower}
\subjclass[2010]{55P42, 55P60, 55S45}

\begin{abstract}
Consider a Quillen adjunction of two variables between combinatorial model
categories from $\C\times \D$ to $\E$, a set $\calS$ of morphisms in
$\C$ and a set $\K$ of objects in $\C$. We prove that there is a localised model structure $L_{\calS}\E$ on
$\E$,  where the local objects are the $\calS$-local objects in~$\E$
described via the right adjoint. Dually, we show that there is a colocalised model structure $C_{\K}\E$ on $\E$, where the colocal equivalences are the $\K$-colocal equivalences in~$\E$ described via the right adjoint. These localised and colocalised model structures generalise left and right Bousfield localisations of
simplicial model categories, Barnes and Roitzheim's familiar model
structures, and Barwick's enriched left and right Bousfield localisations.
\end{abstract}

\maketitle

\section*{Introduction}

Quillen adjunctions between spectra or spaces and other model categories are
a useful way to study homotopy structures. For example, one can gain insight
into a model category $\C$ by studying the canonical action of the homotopy category of
simplicial sets $\Ho(\sset)$ or of the stable homotopy category $\Ho(\Sp)$
(provided that $\C$ is a stable model category) on the homotopy category~$\Ho(\C)$.

In \cite{BR} it was studied how this set-up is compatible with homological
localisations of spectra, that is, left Bousfield localisation at
$E_*$-isomorphisms for a homology theory $E$. For a stable model category
$\C$, Barnes and Roitzheim constructed in~\cite{BR} a corresponding Bousfield
localisation $\C_E$ of $\C$ called \emph{stable $E$-familiarisation} with
appropriate universal properties. One of them implies that $\C_E$ is the
``closest'' model category to $\C$ such that every left Quillen functor from
the model category of symmetric spectra $\Sp$ to $\C_E$ factors over
$E$-local spectra $L_E\Sp$.

In this paper, we take this notion further by studying the compatibility of
Quillen adjunctions of two variables from $\C \times \D$ to $\E$ with
Bousfield localisations and colocalisations of $\C$ or~$\D$.  Given a Quillen adjunction of two
variables between combinatorial model structures
$$
-\otimes- \colon \C\times \D\longrightarrow \E,
$$
$$
\Hom_r(-,-)\colon \D^{op}\times \E\longrightarrow \C,
$$
$$
\Hom_l(-,-)\colon \C^{op}\times \E\longrightarrow \D,
$$
where $\mathcal{E}$ is left proper, and a set $\calS$ of morphisms in $\C$, we define an
$\mathcal{S}$-localised model structure $L_\calS \E$ on $\E$. The
cofibrations of $L_\calS \E$ are the same as the ones in $\E$, and the
fibrant objects are the objects $Z$ that are fibrant in $\E$ and such that
for every morphism $f\colon A\to B$ in $\mathcal{S}$ the induced map
$$
f^*\colon\Hom_l(B, Z)\longrightarrow \Hom_l(A, Z)
$$
is a weak equivalence in $\D$. We show that the fibrant objects of $L_\calS
\E$ can be equivalently characterised in terms of a set of homotopy
generators $\mathcal{G}_\D$ of $\D$. Namely, $Z$ is fibrant in $L_\calS \E$
if it is fibrant in $\E$ and $\Hom_r(G, Z)$ is $\mathcal{S}$-local in $\C$
for every $G$ in $\mathcal{G}_\D$.

In fact, if $I_\D$ denotes the set of generating cofibrations of $\D$ and
$\G_\D$ a set of homotopy generators of $\D$, then the model structure
$L_{\calS}\E$ can be obtained as the left Bousfield localisation of $\E$ with
respect to the set of morphisms $\calS\square I_\D$, where $\square$ denotes
the pushout-product, or equivalently as the left Bousfield localisation of
$\E$ with respect to $\calS\otimes \G_\D$. Moreover, we show that the left Quillen
bifunctor $\C\times \D\to \E$ induces a left Quillen bifunctor between the
localised model structures $L_\calS \C\times \D\to L_\calS \E$, and that the
model structure $L_{\calS}\E$ is the ``closest'' model structure to $\E$ with
this property.

Dually, if we start with a set of objects $\mathcal{K}$ in $\mathcal{C}$, we can obtain similar statements for
the corresponding $\mathcal{K}$-colocalised model structure $C_{\mathcal{K}}\mathcal{E}$.

The $\mathcal S$-localised model structure and the $\mathcal{K}$-colocalised model structure are 
useful as they now generalise two known constructions: the enriched left and right Bousfield
localisations of enriched model categories~\cite{Bar10} and the
$E$\nobreakdash-fami\-lia\-ri\-sation of spectral model
categories~\cite[Section 5]{BR}.

One application of our results is the description of Postnikov sections of arbitrary left proper
combinatorial model categories.  For the category $\sset$ of simplicial sets,
the model structure $P_k\sset$ for $k$th Postnikov sections is obtained via
localising $\sset$ with respect to the map $f_k:{S}^{k+1} \to {D}^{k+2}$.
Using our localisation  construction and combining it with the theory of
framings \cite{Hov99} we can now consider Postnikov sections $P_k\C$ in model
categories $\C$ that are not necessarily simplicial, for instance, when
$\C=\Ch_b(R)$ is the category of non-negatively graded chain complexes of $R$\nobreakdash-modules
endowed with the standard projective model structure.

The $\mathcal{S}$-localised and $\mathcal{K}$-colocalised model structures play also an important role in the study of towers and homotopy pullbacks of model categories, such as Postnikov towers, Bousfield arithmetic squares, and homotopy fibers~\cite{GR16}.

\bigskip
The paper is organised as follows. In Section 1, we recall some terminology
and basic results on locally presentable categories and combinatorial model
categories.  In Section 2, we discuss how Quillen adjunctions of two variables are compatible
with left and right Bousfield localisations. Given a Quillen adjunction of
two variables $\C \times \D \to \E$ we describe Bousfield localisations of
$\E$ based on localisations of $\C$ or $\D$ and their universal properties.
As particular examples, we recover enriched localisations~\cite{Bar10},
enriched colocalisations and $E$-familiarisations~\cite{BR14, BR}. Finally,
in Section 3 we study the special case of Postnikov $k$-types in
combinatorial model categories.
\bigskip

\noindent{\bf Acknowledgements.} The first author would like to thank Dimitri
Ara for many useful conversations on some of the topics of this paper. The
second author would like to thank David Barnes for motivating discussions and
the Radboud Universiteit Nijmegen for their hospitality.

\section{Review of combinatorial model categories}
In this section, we recall some terminology on locally presentable categories
and combinatorial model categories. The essentials of the theory of locally
presentable categories can be found in \cite{AR}, \cite{GU} or \cite{MP}.
Foundations on the theory of combinatorial model categories may be found in
\cite{Beke}, \cite{Dug01} and \cite{Lu}. As in~\cite{Hir03} and~\cite{Hov99}
we will assume that all our model categories are equipped with functorial
factorisations.

\subsection{Locally presentable categories}
Let $\lambda$ be a regular cardinal. A small category $\mathcal{I}$ is called
\emph{$\lambda$\nobreakdash-filtered} if it is nonempty and satisfies the
following two conditions:
\begin{itemize}
\item[{\rm (i)}] Given any set of objects $\{a_i\mid i\in I\}$ in
    $\mathcal{I}$, where $|I|<\lambda$, there is an object $a$ and a
    morphism $a_i\to a$ for each $i\in I$.
\item[{\rm (ii)}] Given any set of parallel morphisms $\{\alpha_j\colon
    a\to a' \mid j\in J\}$ in $\mathcal{I}$ between two fixed objects,
    where $|J|<\lambda$, there is a morphism $\gamma\colon a'\to a''$
    such that $\gamma\circ\alpha_j=\gamma\circ\alpha_{j'}$ for all $j, j'\in
    J$.
\end{itemize}
An object $X$ of a category $\C$ is called
\emph{$\lambda$\nobreakdash-presentable} if the functor $\C(X,-)$ from $\C$
to sets preserves $\lambda$\nobreakdash-filtered colimits.

A~cocomplete category $\C$ is \emph{locally
$\lambda$\nobreakdash-pres\-ent\-able} if there is a set of
$\lambda$-presentable objects $\mathcal{A}$ such that every object of $\C$ is
a $\lambda$-filtered colimit of objects from $\mathcal{A}$. In fact, if $\C$
is locally $\lambda$-presentable, then the collection of all $\lambda$-presentable objects has a set of
representatives with respect to isomorphism, and we will denote by $\mathcal{C}_{\lambda}$ the full subcategory determined
by any such set. The overcategory $\mathcal{C}_{\lambda}\downarrow X$ is
$\lambda$-filtered and if we denote by $\colim(\mathcal{C}_{\lambda}\downarrow X)$ the colimit of the diagram 
$\mathcal{C}_{\lambda}\downarrow X\to \mathcal{C}_{\lambda}\hookrightarrow \mathcal{C}$, where the first functor is the projection, then the canonical map
$$
\colim (\C_{\lambda}\downarrow X)\longrightarrow X
$$
is an isomorphism. A category is \emph{locally presentable} if it is locally $\lambda$-presentable for some regular cardinal $\lambda$.

Every locally $\lambda$-presentable category is equivalent to a full,
reflective subcategory closed under $\lambda$-filtered colimits of the
category of presheaves on some small category; see \cite[Proposition
1.46]{AR}.

\subsection{Combinatorial model categories}
A model category $\C$ is \emph{cofibrantly generated} if there exists a set $I_{\C}$
of \emph{generating cofibrations} and a set $J_{\C}$ of \emph{generating
trivial cofibrations} that one can use to perform the small object argument
(see \cite[Definition 11.1.2]{Hir03} or \cite[Definition 2.1.17]{Hov99} for a
precise definition).

A \emph{homotopy function complex} in a model category $\C$ is a functorial
choice of a fibrant simplicial set $\map_{\C}(X, Y)$, for every two objects
$X$ and $Y$ in $\C$, whose homotopy type is the same as the diagonal of the
bisimplicial set $\C(\widetilde{\mathbf X}, \widehat{\mathbf Y})$, where
$\widetilde{\mathbf X}$ is a cosimplicial resolution of $X$ and
$\widehat{\mathbf{Y}}$ is a simplicial resolution of $Y$; for more details,
see \cite[Chapter 17]{Hir03}. Functorial homotopy function complexes exist in
every model category and they are unique up to homotopy; see~\cite[Proposition 17.5.18 and Theorem 17.5.21]{Hir03}.

Let $\C$ be a model category with homotopy function complex $\map_\C(-,-)$
and let $i\colon A\to B$ and $p\colon X\to Y$ be two morphisms in $\C$. Then
the pair $(i,p)$ is a \emph{homotopy orthogonal pair} if the diagram
$$
\xymatrix{
\map_{\C}(B, X)\ar[r]\ar [d] & \map_{\C}(B, Y) \ar[d] \\
\map_{\C}(A, X)\ar[r] & \map_{\C}(A, Y)
}
$$
is a homotopy fiber square \cite[Definition 17.8.1]{Hir03}. The notion of homotopy orthogonal pair does not depend on the choice of homotopy function complexes, as shown in \cite[Proposition~17.8.2]{Hir03}. 
In particular, we have that 
the pair $(\emptyset\to W, p)$ is homotopy orthogonal if the induced map
$$
p_*\colon \map_\C(W, X)\longrightarrow \map_\C(W, Y)
$$
is a weak equivalence of simplicial sets.

Recall that a model category is \emph{left proper} if pushouts of weak
equivalences along cofibrations are weak equivalences, and \emph{right
proper} if pullbacks of weak equivalences along fibrations are weak
equivalences. A model category is \emph{proper} if it is left and right
proper.

In a cofibrantly generated model category the set of generating cofibrations
can be used to detect weak equivalences. A proof of the following result can
be found in \cite[Theorem 17.8.18]{Hir03}.

\begin{proposition}
Let $\C$ be a cofibrantly generated model category and let $I_{\C}$ be a set
of generating cofibrations. Assume that $\C$ is left proper or that the
domains of the elements of $I_{\C}$ are cofibrant. Then, a map $f$ in $\C$ is
a weak equivalence if and only if for every map $i$ in $I_{\C}$ the pair
$(i,f)$ is a homotopy orthogonal pair. $\hfill\qed$
\label{prop:gen_cof_detect}
\end{proposition}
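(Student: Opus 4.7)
The plan is to treat the two directions separately. The forward implication is formal, while the converse requires factoring $f$ and transferring homotopy orthogonality across the factorisation.

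For the forward direction, assume $f$ is a weak equivalence. Since the homotopy function complex is a derived bifunctor, both horizontal maps $\map_\C(A,f)$ and $\map_\C(B,f)$ in the defining square are weak equivalences of simplicial sets, so the square is automatically a homotopy pullback. No extra hypothesis on $\C$ is needed for this direction.

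For the converse, one would factor $f = p \circ j$ with $j\colon X \to Z$ a trivial cofibration and $p\colon Z \to Y$ a fibration, using the small object argument on $J_\C$. Applying the forward direction to the weak equivalence $j$ shows that $(i,j)$ is a homotopy orthogonal pair for every $i \in I_\C$. The pasting lemma for homotopy pullback squares, applied to the three mapping-space squares associated with $j$, $p$ and $p\circ j = f$, then forces $(i,p)$ to be homotopy orthogonal as well. It therefore suffices to show that any fibration $p$ which is homotopy orthogonal to every $i\in I_\C$ is a trivial fibration, since then $f$ is the composite of two weak equivalences.

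The main obstacle is this last implication, and this is precisely where the left-properness or cofibrant-domains hypothesis enters. Homotopy orthogonality of $(i,p)$ with $p$ a fibration produces, for every commutative square with left edge $i$ and right edge $p$, a contractible space of homotopy lifts. The remaining task is to rigidify these into strict lifts: the hypothesis ensures that for $i\in I_\C$ the objects $A$ and $B$ behave as their own cosimplicial resolutions when mapping into fibrations, so that the simplicial hom sets
$$
\map_\C(B,Z)\longrightarrow \map_\C(A,Z)\times_{\map_\C(A,Y)}\map_\C(B,Y)
$$
become a model for the derived comparison, and the weak equivalence produced by homotopy orthogonality encodes the ordinary right lifting property on the nose. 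This identifies $p$ as a trivial fibration and completes the argument. The technical execution of this rigidification is carried out in \cite[Theorem 17.8.18]{Hir03}.
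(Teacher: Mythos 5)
The paper offers no argument of its own for this proposition: it is stated with a \qed{} and the preceding remark that ``a proof of the following result can be found in \cite[Theorem 17.8.18]{Hir03}''. So there is no in-paper proof to diverge from, and your ultimate appeal to the same reference is consistent with the paper's treatment.

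Read as a self-contained argument, however, there is a genuine gap. Your reductions are fine: the forward direction needs no hypotheses, and factoring $f=p\circ j$ with $j$ a trivial cofibration and $p$ a fibration does transfer homotopy orthogonality from $f$ to $p$ --- though note this is not the generic pasting lemma (``outer and left homotopy pullbacks imply right'' fails in general); it works here because $j$ is a weak equivalence, so the square for $f$ and the square for $p$ are weakly equivalent diagrams and being a homotopy fiber square is invariant under such equivalences. The problem is the remaining step. After your (easy) reduction, the claim that every fibration homotopy orthogonal to all $i\in I_\C$ is a trivial fibration is equivalent to the proposition itself, and it is exactly where left properness or cofibrancy of the domains of $I_\C$ must be used; your proof of it consists of citing \cite[Theorem 17.8.18]{Hir03}, i.e.\ the very statement being proved, so beyond the formal reductions nothing has actually been established. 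Moreover, the rigidification you sketch is not correct as stated: homotopy orthogonality does not encode the strict right lifting property ``on the nose'' without fibrancy assumptions on the target of $p$ (your $Z$ and $Y$ need not be fibrant), and in the left-proper alternative the domains of $I_\C$ need not be cofibrant at all, so the remark about the domains being ``their own cosimplicial resolutions'' does not apply; in Hirschhorn's actual argument that case is handled by replacing the generating cofibrations by cofibrant approximations and using left properness to show that homotopy orthogonality is preserved under this replacement, a mechanism absent from your outline.
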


A set of \emph{homotopy generators} for a model category $\C$ consists of a
small full subcategory $\mathcal{G}$ such that every object of $\C$ is weakly
equivalent to a filtered homotopy colimit of objects of $\mathcal{G}$. A set
of homotopy generators also detects weak equivalences.

\begin{proposition}\label{prop:generatorlift}
Let $\C$ be a model category with homotopy function complex $\map_\C(-,-)$
and a set of cofibrant homotopy generators $\mathcal{G}$. Then a map $f\colon
X\to Y$ in $\C$ is a weak equivalence if and only if for every $G$ in
$\mathcal{G}$ the pair $(j_{G}, f)$ is a homotopy orthogonal pair, where
$j_G$ denotes the morphism $\emptyset\to G$. \label{prop:hom_gen_detect}
\end{proposition}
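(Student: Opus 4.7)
My plan is to reduce the homotopy orthogonality condition to a more familiar one and then exploit that mapping spaces turn homotopy colimits in the source variable into homotopy limits.

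The first step is to simplify the hypothesis. Since $\emptyset$ is initial in $\C$, the homotopy function complex $\map_\C(\emptyset, X)$ is contractible for every $X$, so the square defining homotopy orthogonality of $(j_G, f)$ degenerates and the condition becomes simply that $\map_\C(G, f)\colon \map_\C(G, X) \to \map_\C(G, Y)$ is a weak equivalence of simplicial sets. The forward direction is then immediate: if $f$ is a weak equivalence, then $\map_\C(G, f)$ is a weak equivalence for every cofibrant $G$ by the standard invariance of homotopy function complexes, see \cite[Theorem 17.6.3]{Hir03}. Note that the hypothesis on $\mathcal{G}$ is not needed here.

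For the converse, assume that $\map_\C(G, f)$ is a weak equivalence for every $G \in \mathcal{G}$. I would argue that $\map_\C(W, f)$ is a weak equivalence for \emph{every} object $W$ of $\C$ (after cofibrant replacement), and then invoke a Yoneda argument to conclude. By the definition of a set of homotopy generators, every $W$ is weakly equivalent to a filtered homotopy colimit $\hocolim_i G_i$ of objects $G_i \in \mathcal{G}$. The key input is the classical fact that for cofibrant diagrams the homotopy function complex sends homotopy colimits in the first argument to homotopy limits, so that
$$
\map_\C(W, Z) \simeq \holim_i \map_\C(G_i, Z)
$$
for $Z \in \{X, Y\}$ (cf.\ \cite[Theorem 19.4.4]{Hir03}). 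Since each $\map_\C(G_i, f)$ is a weak equivalence and $\holim$ preserves levelwise weak equivalences between Reedy fibrant diagrams of simplicial sets, the induced map $\map_\C(W, f)$ is itself a weak equivalence.

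Taking $\pi_0$ then yields that $[W, X] \to [W, Y]$ is a bijection for every $W$, so $f$ represents an isomorphism in $\Ho(\C)$ by the Yoneda lemma, i.e., $f$ is a weak equivalence. The step that I expect to need the most care is the identification $\map_\C(\hocolim_i G_i, -) \simeq \holim_i \map_\C(G_i, -)$; this is a standard compatibility but relies on choosing the $G_i$ cofibrant (given) and on arranging a sufficiently good (Reedy fibrant) model for the target diagram so that both $\holim$ and the levelwise weak equivalence argument behave correctly. Everything else is essentially a bookkeeping application of Hirschhorn's framework for homotopy function complexes.
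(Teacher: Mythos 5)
Your argument is correct and takes essentially the same route as the paper: both reduce homotopy orthogonality of $(\emptyset\to G,f)$ to $\map_\C(G,f)$ being a weak equivalence, write an arbitrary $W$ as a filtered homotopy colimit of generators, and apply the $\hocolim$/$\holim$ compatibility of homotopy function complexes (Hirschhorn 19.4.2 and 19.4.4) to a fibrant approximation of $f$. The only cosmetic difference is your closing step via $\pi_0$ and Yoneda in $\Ho(\C)$ plus saturation, where the paper simply cites Hirschhorn's Theorem 17.7.7 that mapping spaces out of all objects detect weak equivalences.
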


\begin{proof}
Let $j_W$ denote  the map $\emptyset\to W$. By \cite[Theorem 17.7.7]{Hir03} a
map $f\colon X\to Y$ is a weak equivalence if and only if the pair $(j_W, f)$ is
a homotopy orthogonal pair for every object
$W$, that is, if and only if
the induced map
$$
f_*\colon\map_\C(W, X)\longrightarrow \map_{\C}(W, Y)
$$
is a weak equivalence. Let $\widehat{f}\colon \widehat{X}\to \widehat{Y}$ be a
fibrant approximation of $f$. By assumption every object $W$ is weakly equivalent to
a filtered homotopy colimit $\hocolim G_{\alpha}$ of objects of
$\mathcal{G}$, and hence \cite[Theorem 19.4.2(2)]{Hir03} and \cite[Theorem
19.4.4]{Hir03} imply that
$$
\map_{\C}(\hocolim G_{\alpha}, \widehat{X})\simeq\holim(\map_{\C}(G_\alpha, \widehat{X}))
$$
and that the map
$$
\holim(\map_{\C}(G_\alpha, \widehat{X}))\longrightarrow\holim(\map_{\C}(G_\alpha, \widehat{Y}))
$$
is a weak equivalence. The result now follows from the fact that homotopy
function complexes are homotopy invariant; see \cite[Theorem 17.7.7]{Hir03}.
\end{proof}

Let $\lambda$ be a regular cardinal. A~model category $\C$ is called
\emph{$\lambda$-combinatorial} if it is cofibrantly generated and the
underlying category is locally $\lambda$-presentable. A model category $\C$
is called \emph{combinatorial} if it is $\lambda$-combinatorial for some
regular cardinal~$\lambda$.

Every combinatorial model category is Quillen equivalent to a left Bousfield
localisation of a category of diagrams of simplicial sets equipped with the
projective model structure \cite[Theorem 1.1]{Dug01} and many model
categories of interest are combinatorial. Examples are pointed or unpointed
simplicial sets, pointed or unpointed motivic spaces~\cite{DRO, MV}, symmetric spectra over
simplicial sets \cite[\S\,3.4]{HSS} or over motivic spaces~\cite{Jar00}, module spectra
over a ring spectrum \cite[Theorem~4.1]{SS}, bounded or unbounded chain
complexes of modules over a ring \cite[\S\,2.3]{Hov99}, or any locally
presentable category equipped with the \emph{discrete} model structure, where
the weak equivalences are the isomorphisms and all morphisms are fibrations
and cofibrations.

Dugger also proved in \cite[Proposition 4.7]{Dug01} that every combinatorial
model category has a set of homotopy generators (in the sense that every object is a homotopy colimit of those) and that, moreover, they can
be chosen to be cofibrant. Recall that we denote by $\C\downarrow X$ the slice category
of $\C$ over an object $X$, and that by $\hocolim(\mathcal{C}_{\lambda}\downarrow X)$ we mean the homotopy colimit of the projection followed by inclusion functor $\mathcal{C}_{\lambda}\downarrow X\to \mathcal{C}_{\lambda}\hookrightarrow \mathcal{C}$.

\begin{proposition}[Dugger]
Let $\lambda$ be a regular cardinal and let $\C$ be a
$\lambda$-combina\-to\-rial model category. Let $\C_{\lambda}\subseteq \mathcal{C}$ denote the full
subcategory of the $\lambda$-presentable objects. Then every object $X$ is a
canonical filtered homotopy colimit of objects of~ $\C_{\lambda}$. More
precisely, the canonical map
\[
\hocolim(\C_{\lambda}\downarrow X)\longrightarrow X
 \]
is a weak equivalence. Moreover, there is a regular cardinal $\mu> \lambda$
such that the canonical map
\[
\hocolim(\C_{\mu}^{\rm cof}\downarrow X)\longrightarrow X
 \]
is a weak equivalence, where $\C_{\mu}^{\rm cof}$ denotes the full
subcategory of $\C_{\mu}$ consisting of the cofibrant objects. $\hfill\qed$
\end{proposition}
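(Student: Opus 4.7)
The plan is to exploit local presentability to identify the canonical ordinary colimit with $X$, then promote it to a homotopy colimit using a projectively cofibrant replacement of the canonical diagram; for the moreover clause, a cofibrant replacement argument restricts the indexing to cofibrant $\mu$-presentable objects after enlarging $\lambda$ to a cardinal $\mu$ for which the cofibrant replacement functor is sufficiently accessible.

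For the first assertion, local $\lambda$-presentability yields $\colim(\C_{\lambda}\downarrow X)\cong X$ over a $\lambda$-filtered indexing category. Taking a projectively cofibrant replacement $\widetilde{D}\to D$ of the canonical diagram $D\colon (\C_{\lambda}\downarrow X)\to \C$ in the projective model structure on $\C$-valued diagrams, one has $\hocolim D\simeq \hocolim\widetilde{D}\simeq\colim\widetilde{D}$, the latter because $\widetilde{D}$ is projectively cofibrant. Since filtered colimits of pointwise weak equivalences are weak equivalences in a combinatorial model category, the induced map $\colim\widetilde{D}\to\colim D=X$ is a weak equivalence, giving $\hocolim D\simeq X$.

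For the moreover clause I would choose a regular cardinal $\mu>\lambda$ large enough that $\C$ is $\mu$-combinatorial and the small-object-argument cofibrant replacement functor $Q\colon\C\to\C$ is $\mu$-accessible; in particular, $Q$ restricts to a functor $\C_{\mu}\to \C_{\mu}^{\rm cof}$. Such $\mu$ exists because the generating cofibrations of a combinatorial model category can be taken with presentable (co)domains, which makes the transfinite construction of $Q$ accessible. The induced functor $\widetilde{Q}\colon (\C_{\mu}\downarrow X)\to (\C_{\mu}^{\rm cof}\downarrow X)$ and the inclusion $\iota$ in the opposite direction compose to functors that are objectwise weakly equivalent to the identities via the natural weak equivalence $QA\to A$, so the induced map $\hocolim(\C_{\mu}^{\rm cof}\downarrow X)\to\hocolim(\C_{\mu}\downarrow X)$ is a weak equivalence; combined with the first assertion applied at the cardinal $\mu$, this yields $\hocolim(\C_{\mu}^{\rm cof}\downarrow X)\simeq X$.

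The main obstacle is the size bookkeeping required to arrange the $\mu$-accessibility of $Q$; this is the core technical input of Dugger's argument in \cite[Proposition~4.7]{Dug01}. Once in place, the rest is formal: local presentability supplies the ordinary colimit, combinatoriality supplies the hocolim-versus-colim comparison for filtered diagrams, and the objectwise weak equivalence between $\iota\widetilde{Q}$ and the identity restricts attention to cofibrant objects.
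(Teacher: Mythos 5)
The paper itself gives no argument for this statement---it is quoted from \cite[Proposition 4.7]{Dug01} with a \textit{qed}---so the only meaningful comparison is with Dugger's proof, and measured against that your proposal assumes precisely the nontrivial point. The step ``filtered colimits of pointwise weak equivalences are weak equivalences in a combinatorial model category'' is not something you can quote at this level of generality: the citable fact (and the one this paper itself invokes elsewhere, via \cite[Proposition 7.3]{Dug01}) is that there \emph{exists} a sufficiently large regular cardinal $\kappa$ such that $\kappa$-filtered colimits preserve weak equivalences, equivalently are already homotopy colimits. The class of weak equivalences of a $\lambda$-combinatorial model category is accessible and accessibly embedded in the arrow category, but closure under $\lambda$-filtered colimits at the given rank $\lambda$ (your index category $(\C_{\lambda}\downarrow X)$ is only $\lambda$-filtered) is exactly what has to be established, and is where the cardinal arithmetic in Dugger's argument lives. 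Since your identification $\hocolim(\C_{\lambda}\downarrow X)\simeq\colim\widetilde{D}\simeq\colim D\cong X$ rests entirely on this claim, the first assertion is not proved: the whole content of the proposition is that this particular filtered colimit is a homotopy colimit, and that is asserted rather than shown.

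The ``moreover'' clause has the right shape (it is essentially Dugger's own refinement), but three points need attention. First, $\mu$-accessibility of the cofibrant replacement functor $Q$ does not by itself imply that $Q$ preserves $\mu$-presentable objects; one needs a further uniformization step, choosing $\mu$ so that $Q$ is $\mu$-accessible \emph{and} carries $\C_{\mu}$ into $\C_{\mu}$---possible, but an extra argument. Second, the zig-zag between $\iota$ and $\widetilde{Q}$ only provides natural objectwise weak equivalences $QA\to A$ over $X$; to conclude that $\hocolim(\C_{\mu}^{\rm cof}\downarrow X)\to\hocolim(\C_{\mu}\downarrow X)$ is a weak equivalence you still need an actual homotopy finality argument for the inclusion $(\C_{\mu}^{\rm cof}\downarrow X)\to(\C_{\mu}\downarrow X)$, or a comparison of both homotopy colimits with that of the diagram $QD$. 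Third, you apply the first assertion ``at the cardinal $\mu$'', so $\mu$ must in addition be chosen large enough that $\mu$-filtered colimits preserve weak equivalences---the very input missing above. Once those requirements are added the outline does go through, but at that point you have reproduced the cardinal bookkeeping of \cite[Proposition 4.7]{Dug01} that your final paragraph explicitly sets aside, rather than given an independent proof; as it stands, the core of the statement is taken as a black box.
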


Given a combinatorial model category $\C$, we will denote by
$\mathcal{G}_{\C}$ a set of cofibrant homotopy generators, whose existence is guaranteed by the previous proposition.

\begin{corollary}
Let $\C$ be a combinatorial model category with a set of generating
cofibrations $I_{\C}$ and a set of cofibrant homotopy generators
$\mathcal{G}_\C$. Assume that $\C$ is left proper or that the domains of the
elements of $I_\C$ are cofibrant. Then, for every map $f$ in $\C$, the pair
$(i,f)$ is a homotopy orthogonal pair for all $i$ in $I_{\C}$ if and only for
every $G$ in $\mathcal{G}_\C$, the pair $(j_{G}, f)$ is a homotopy orthogonal
pair, where $j_G$ denotes the morphism $\emptyset\to G$.
\label{cor:equiv_gen}
\end{corollary}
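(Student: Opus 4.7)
The plan is to observe that this corollary is essentially the composition of the two detection results already established, namely Proposition~\ref{prop:gen_cof_detect} and Proposition~\ref{prop:generatorlift}. Each of the two conditions appearing in the statement is separately equivalent to $f$ being a weak equivalence, so they are equivalent to each other.

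More concretely, first I would invoke Proposition~\ref{prop:gen_cof_detect}: under the standing hypothesis that $\C$ is left proper or that the domains of the elements of $I_{\C}$ are cofibrant, a map $f$ in $\C$ is a weak equivalence if and only if $(i,f)$ is a homotopy orthogonal pair for every $i\in I_{\C}$. Next, since $\C$ is combinatorial, Dugger's proposition (applied above) guarantees that the set $\mathcal{G}_{\C}$ of cofibrant homotopy generators really exists and satisfies the hypothesis of Proposition~\ref{prop:generatorlift}. Applying that proposition gives the second characterisation: $f$ is a weak equivalence if and only if $(j_G,f)$ is a homotopy orthogonal pair for every $G\in\mathcal{G}_{\C}$. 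Chaining the two equivalences yields the desired if-and-only-if statement.

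There is no real obstacle here, since both halves of the argument are literal quotations of the preceding propositions; the only thing to check is that their hypotheses match those assumed in the corollary. The left-properness-or-cofibrant-domains hypothesis is needed only for the first equivalence (through $I_{\C}$), while the second equivalence (through $\mathcal{G}_{\C}$) needs nothing beyond the cofibrancy of the generators, which is built into the definition of $\mathcal{G}_{\C}$ via Dugger's result. Thus the proof reduces to a one-line appeal to the two propositions, and the corollary is immediate.
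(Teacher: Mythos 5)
Your proposal is correct and matches the paper's argument: the corollary is obtained by chaining Proposition~\ref{prop:gen_cof_detect} (equivalence of the $I_{\C}$-condition with $f$ being a weak equivalence) and Proposition~\ref{prop:hom_gen_detect} (equivalence of the $\mathcal{G}_{\C}$-condition with $f$ being a weak equivalence), which is precisely the paper's one-line proof.
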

\begin{proof}
This is a consequence of Proposition~\ref{prop:gen_cof_detect} and
Proposition~\ref{prop:hom_gen_detect}.
\end{proof}

\section{Left and right Bousfield localisations along Quillen bifunctors}
In this section we are going to discuss how Quillen adjunctions of two variables are compatible
with left and right Bousfield localisation.
\subsection{Quillen bifunctors}
Let $\C$, $\D$ and $\E$ be categories. An \emph{adjunction of two variables}
from $\C\times \D$ to $\E$ is given by functors
$$
-\otimes- \colon \C\times \D\longrightarrow \E,
$$
$$
\Hom_r(-,-)\colon \D^{op}\times \E\longrightarrow \C,
$$
$$
\Hom_l(-,-)\colon \C^{op}\times \E\longrightarrow \D,
$$
and natural isomorphisms
$$
\C(X, \Hom_r(Y, Z))\cong \E(X\otimes Y, Z)\cong\D(Y, \Hom_l(X, Z)).
$$

We will sometimes denote an adjunction of two variables from $\C\times \D$ to
$\E$ just by the left adjoint $\C\times \D\to \E$. The following analogous notion for
model categories appears in~\cite[Definition 4.2.1]{Hov99}.

\begin{definition}
Let $\C$, $\D$ and $\E$ be model categories. An adjunction of two variables
from $\C\times \D$ to $\E$ is a \emph{Quillen adjunction of two variables} if
for every cofibration $f\colon A\to B$ in $\C$ and every cofibration $g\colon
X\to Y$ in $\D$, the pushout-product$$ f\square g\colon B\otimes
X\coprod_{A\otimes X} A\otimes Y\longrightarrow B\otimes Y
$$
is a cofibration in $\E$ which is a trivial cofibration if either $f$ or $g$ is a trivial cofibration.
We will refer to the left adjoint $\otimes$ of a
Quillen adjunction of two variables as a \emph{left Quillen bifunctor}.
\end{definition}

Every Quillen adjunction of two variables induces a derived adjunction of two variables
on the corresponding homotopy categories.

\begin{rmk}
There are equivalent formulations of the previous condition satisfied by a
Quillen adjunction of two variables in terms of $\Hom_r^{\square}$ and
$\Hom_l^{\square}$, where $\Hom_r^{\square}$ and $\Hom_l^{\square}$ denote
the respective adjoints of the pushout-product;
see~\cite[Lemma~4.2.2]{Hov99}. Explicitly, an adjunction of two variables $\C\times \D\to\E$ is a Quillen
adjunction of two variables if and only if any of the following equivalent conditions hold:
\begin{itemize}
\item[{\rm (i)}] For every cofibration $f\colon A\to B$ in $\C$ and every fibration $h\colon U\to V$ in
$\E$, the map
$$
\Hom_l^{\square}(f,h)\colon \Hom_l(B,U)\longrightarrow \Hom_l(B, V)\times_{\Hom_l(A,V)}\Hom_l(A,U)
$$
is a fibration in $\D$ which is a trivial fibration if either $f$ is a trivial cofibration or $h$ is a trivial fibration.
\item[{\rm (ii)}] For every cofibration $g\colon X\to Y$ in $\D$ and every fibration $h\colon U\to V$ in
$\E$, the map
$$
\Hom_r^{\square}(g,h)\colon \Hom_r(Y,U)\longrightarrow \Hom_r(Y, V)\times_{\Hom_r(X,V)}\Hom_r(X,U)
$$
is a fibration in $\C$ which is a trivial fibration if either $g$ is a trivial cofibration or $h$ is a trivial fibration.
\end{itemize}
\end{rmk}

\begin{rmk}
If $(\otimes, \Hom_r, \Hom_l)$ is a Quillen adjunction of two variables from
$\C\times \D$ to $\E$ and $F_1\colon \C'\to \C$, $F_2\colon \D'\to \D$ and
$F_3\colon \E\to \E'$ are left Quillen functors (with right adjoints $G_1$,
$G_2$ and $G_3$, respectively), then
$$
(F_3(F_1(-)\otimes F_2(-)), G_1\Hom_r(F_2(-), G_3(-)), G_2\Hom_l(F_1(
-), G_3(-)))
$$
is a Quillen adjunction of two variables from $\C'\times \D'$ to $\E'$.
\end{rmk}

\begin{ex}
Let $\sset$ denote the category of simplicial sets with the Kan--Quillen
model structure. A \emph{simplicial model structure} on a model category $\C$
is the same as a left Quillen bifunctor $\C\times \sset\to \C$. A
\emph{topological model structure} can be defined similarly, by replacing
simplicial sets with the category of compactly generated Hausdorff spaces
equipped with the Quillen model structure.

Let $(\E, \otimes, I, \Hom_{\E})$ be a closed symmetric monoidal category. A
model structure on $\E$ is called a \emph{monoidal model structure} if
$-\otimes -\colon \E\times \E\to \E$ is a left Quillen bifunctor and the unit $I$
is cofibrant.

Let $\E$ be a monoidal model category. An \emph{$\E$-model category} is a
category $\C$ enriched, tensored and cotensored over $\E$ together with a
model structure such that the tensor, enrichment and cotensor define a
Quillen adjunction of two variables.
\end{ex}
The following two lemmas are an immediate consequence of the bifunctor
adjunctions and we state them without proof. We will use the terminology $f
\pitchfork g$ to indicate that a morphism $f$ has the \emph{left lifting
property} with respect to $g$ (or that $g$ has the \emph{right lifting
property} with respect to $f$), that is, $f \pitchfork g$ if for every
commutative diagram of the form
$$
\xymatrix{
A\ar[d]_f \ar[r]^i & X\ar[d]^g\\
B\ar[r]_p\ar@{.>}[ur]^h & Y,
}
$$
there is a diagonal lifting $h$ such that $i=hf$ and $p=gh$.

\begin{lemma}\label{lemma1}
Let $(\otimes, \Hom_r, \Hom_l)$ be an adjunction of two variables from
$\C\times \D$ to $\E$ and let $f$, $g$ and $h$ be morphisms in $\C$, $\D$ and
$\E$, respectively. The following are equivalent:
\begin{itemize}
\item[{\rm (i)}] $(f \square g) \pitchfork h$.
\item[{\rm (ii)}] $f \pitchfork \Hom_r^\square(g,h)$.
\item[{\rm (iii)}] $g \pitchfork \Hom_l^\square(f,h)$. \qed
\end{itemize}
\end{lemma}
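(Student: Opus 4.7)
The plan is to prove \textnormal{(i)} $\Leftrightarrow$ \textnormal{(ii)} by unpacking both lifting problems into equivalent collections of data via the two-variable adjunction; the equivalence \textnormal{(i)} $\Leftrightarrow$ \textnormal{(iii)} then follows by the symmetric argument using $\Hom_l$ in place of $\Hom_r$.

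First I would spell out what $\Hom_r^{\square}(g,h)$ is: if $g\colon X\to Y$ in $\D$ and $h\colon W\to Z$ in $\E$, then $\Hom_r^{\square}(g,h)$ is the canonical map
$$
\Hom_r(Y,W)\longrightarrow \Hom_r(X,W)\times_{\Hom_r(X,Z)}\Hom_r(Y,Z),
$$
dual to the pushout-product construction. A lifting problem $f\pitchfork \Hom_r^{\square}(g,h)$ therefore amounts to a commutative square whose right-hand vertical is this map; by the universal property of the pullback, the data of such a square is the same as a pair of compatible maps $B\to \Hom_r(X,W)$ and $A\to \Hom_r(Y,W)$ together with the compatibility witnessed against $\Hom_r(Y,Z)$ and $\Hom_r(X,Z)$, and a lift corresponds to a single map $B\to \Hom_r(Y,W)$ restricting appropriately.

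Next I would translate this bundle of data across the adjunction $\C(-,\Hom_r(-,-))\cong \E(-\otimes -,-)$. The two maps $B\to \Hom_r(X,W)$ and $A\to \Hom_r(Y,W)$ correspond under adjunction to maps $B\otimes X\to W$ and $A\otimes Y\to W$, and their compatibility (namely that both have the same composite to $\Hom_r(X,Z)$ after suitable composition) translates precisely to the condition that the induced maps $A\otimes X\rightrightarrows W$ agree and descend compatibly to $Z$. By the universal property of the pushout $B\otimes X\coprod_{A\otimes X} A\otimes Y$, this is exactly the data of a commutative square from $f\square g$ to $h$. Similarly, a lift in the transformed diagram corresponds under adjunction to a lift in the original, so $(f\square g)\pitchfork h$ iff $f\pitchfork \Hom_r^{\square}(g,h)$.

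The equivalence with (iii) is then obtained by running the same argument with the other adjunction isomorphism $\E(-\otimes -, -)\cong \D(-,\Hom_l(-,-))$, now viewing the pushout-product as glued along the variable in $\C$ rather than in $\D$. I do not expect any genuine obstacle: the entire content is the naturality of the two-variable adjunction together with the fact that adjunctions carry pushouts to pullbacks on the appropriate side, so the proof is a careful bookkeeping exercise — which is presumably why the authors state the lemma without proof.
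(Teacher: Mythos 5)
Your argument is correct and is exactly the adjunction bookkeeping the paper has in mind: the paper states this lemma without proof precisely because, as you note, a lifting square against $\Hom_r^\square(g,h)$ (resp.\ $\Hom_l^\square(f,h)$) transposes under the two-variable adjunction, using that the pullback defining $\Hom_r^\square$ corresponds to the pushout defining $f\square g$, into a lifting square of $f\square g$ against $h$, and lifts correspond to lifts. No gaps; this matches the standard proof (cf.\ Hovey, Lemma 4.2.2).
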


\begin{lemma}\label{lemma2}
Let $(\otimes, \Hom_r, \Hom_l)$ be an adjunction of two variables from
$\C\times \D$ to $\E$ between model categories.
\begin{itemize}
\item[{\rm (i)}] The following are equivalent:
\begin{itemize}
\item[{\rm (a)}] For every cofibration $f$ in $\C$ and a every cofibration
    $g$ in $\D$, the morphism $f \square g$ is a cofibration in $\E$.
\item[{\rm (b)}] For every a cofibration $g$ in $\D$ and for every trivial
    fibration $h$ in $\E$, the morphism $\Hom^\square_r(g,h)$ is a
    trivial fibration in $\C$.
\item[{\rm (c)}] For every cofibration $f$ in $\C$ and every trivial
    fibration $h$ in $\E$, the morphism $\Hom^\square_l(f,h)$ is a
    trivial fibration in $\D$.
\end{itemize}

\item[{\rm (ii)}] The following are equivalent:
\begin{itemize}
\item[{\rm (a)}] For every cofibration $f$ in $\C$ and every
    trivial cofibration $g$ in $\D$, the morphism $f \square g$ is a trivial
    cofibration in $\E$.
\item[{\rm (b)}] For every trivial cofibration $g$ in $\D$ and every fibration $h$
    in $\E$, the morphism $\Hom^\square_r(g,h)$ is a trivial fibration in
    $\C$.
\item[{\rm (c)}] For every cofibration $f$ in $\C$ and every
    fibration $h$ in $\E$, the morphism $\Hom^\square_l(f,h)$ is a
    fibration in $\D$.\qed
\end{itemize}
\end{itemize}
\end{lemma}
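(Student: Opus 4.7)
The plan is to leverage Lemma \ref{lemma1} to transport lifting properties across the three slots of the adjunction of two variables, combined with the standard characterisation of the four distinguished classes of maps in a model category by lifting properties: cofibrations are precisely the maps with the left lifting property against trivial fibrations, trivial cofibrations those with the left lifting property against all fibrations, and dually for (trivial) fibrations on the right.

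For part (i), I would focus on (a)$\Leftrightarrow$(b); the equivalence (a)$\Leftrightarrow$(c) is entirely analogous after swapping $\Hom_r$ for $\Hom_l$. Saying that $f\square g$ is a cofibration in $\E$ for every cofibration $f\in\C$ and every cofibration $g\in\D$ is, by the cofibration/trivial-fibration characterisation, equivalent to requiring $(f\square g)\pitchfork h$ for all such $f,g$ and every trivial fibration $h$ in $\E$. Applying Lemma \ref{lemma1}(i)$\Leftrightarrow$(ii) slotwise, this reformulates as $f\pitchfork \Hom^{\square}_r(g,h)$ for every cofibration $f\in\C$, every cofibration $g\in\D$, and every trivial fibration $h\in\E$. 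Quantifying first over $f$ recognises the inner condition as saying that $\Hom^{\square}_r(g,h)$ has the right lifting property against all cofibrations of $\C$, i.e.\ is a trivial fibration, which is precisely (b). The route (a)$\Leftrightarrow$(c) proceeds identically but via Lemma \ref{lemma1}(i)$\Leftrightarrow$(iii).

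Part (ii) follows the same template with the roles adjusted for triviality: replace ``trivial fibration in $\E$'' by ``fibration in $\E$'' throughout, and place the ``trivial'' adjective on the cofibration that is being tested. For (a)$\Leftrightarrow$(b), the trivial cofibration $f\square g$ is characterised by LLP against all fibrations $h$, and Lemma \ref{lemma1}(i)$\Leftrightarrow$(ii) translates this into $\Hom^{\square}_r(g,h)$ having RLP against trivial cofibrations, i.e.\ being a fibration. For (a)$\Leftrightarrow$(c) one uses Lemma \ref{lemma1}(i)$\Leftrightarrow$(iii); since here the trivial cofibration sits in the $\C$-slot, the resulting condition on $\Hom^{\square}_l(f,h)$ is RLP against all cofibrations of $\D$, i.e.\ the statement that it is a trivial fibration. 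The only point requiring care is the bookkeeping of which slot carries the triviality in each of the three reformulations, but once Lemma \ref{lemma1} is applied slotwise this is entirely formal; there is no substantive obstacle.
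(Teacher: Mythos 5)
Your argument is correct and is exactly the intended one: the paper states this lemma without proof as an ``immediate consequence of the bifunctor adjunctions,'' i.e.\ precisely the slotwise application of Lemma~\ref{lemma1} together with the standard lifting-property characterisations of (trivial) cofibrations and (trivial) fibrations that you carry out. Your bookkeeping of quantifiers and of which slot carries the triviality is accurate, so there is nothing to add.
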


Note that in every Quillen adjunction of two variables, if $X$ is cofibrant in $\C$, then $X\otimes -$ is a left Quillen
functor with right adjoint $\Hom_l(X, -)$. Similarly, if $Y$ is cofibrant in
$\D$, then $-\otimes Y$ is a left Quillen functor with right adjoint
$\Hom_r(Y, -)$.

Just as in the case of Quillen functors (see \cite[Proposition 8.5.4]{Hir03})
we have the following proposition which will be useful to test whether an
adjunction of two variables is a Quillen adjunction of two variables. In order to prove it, we
will make use of the following key result, which appears as \cite[Lemma
7.14]{JT}.

\begin{lemma}\label{lemma3}
A cofibration in a model category is a trivial cofibration if and only if it
has the left lifting property with respect to every fibration between fibrant
objects. Dually, a fibration in a model category is a trivial fibration if
and only if it has the right lifting property with respect to every
cofibration between cofibrant objects. \qed
\end{lemma}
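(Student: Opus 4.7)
The ``only if'' directions of both statements are immediate from the model category axioms: every trivial cofibration has the left lifting property against all fibrations, and dually every trivial fibration has the right lifting property against all cofibrations. So the content lies in the converses, and by duality it suffices to treat the first.

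Let $f\colon A\to B$ be a cofibration with the left lifting property with respect to every fibration between fibrant objects. The plan is to build a commutative square whose right vertical map is a fibration between fibrant objects, apply the hypothesis to obtain a lift, and then deduce that $f$ is a weak equivalence via the $2$-out-of-$6$ property of weak equivalences. Concretely, first choose a fibrant replacement $r\colon B\to B^{\mathrm{f}}$, a trivial cofibration with $B^{\mathrm{f}}$ fibrant. Then factor the composite $rf\colon A\to B^{\mathrm{f}}$ as a trivial cofibration $j\colon A\to A^{\mathrm{f}}$ followed by a fibration $p\colon A^{\mathrm{f}}\to B^{\mathrm{f}}$. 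Since $B^{\mathrm{f}}$ is fibrant and $p$ is a fibration, $A^{\mathrm{f}}$ is fibrant as well, so $p$ is a fibration between fibrant objects.

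Now the square
$$
\xymatrix{
A \ar[d]_f \ar[r]^{j} & A^{\mathrm{f}} \ar[d]^{p} \\
B \ar[r]_{r} \ar@{.>}[ur]^{h} & B^{\mathrm{f}}
}
$$
commutes by construction, so by hypothesis there is a diagonal lift $h\colon B\to A^{\mathrm{f}}$ satisfying $hf=j$ and $ph=r$. This produces a string of composable morphisms
$$
A \xrightarrow{\;f\;} B \xrightarrow{\;h\;} A^{\mathrm{f}} \xrightarrow{\;p\;} B^{\mathrm{f}}
$$
in which both composites of length two, namely $hf=j$ and $ph=r$, are weak equivalences. Applying the $2$-out-of-$6$ property for the class of weak equivalences (which holds in any model category, since weak equivalences are closed under retracts and admit the usual factorizations) yields that $f$, $h$ and $p$ are all weak equivalences. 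In particular $f$ is a cofibration and a weak equivalence, hence a trivial cofibration, as required.

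The main subtlety is the use of $2$-out-of-$6$ rather than just $2$-out-of-$3$: from $ph$ and $hf$ being weak equivalences, $2$-out-of-$3$ alone does not identify $f$ as a weak equivalence, so one really does need the stronger saturation property, which is the standard tool behind Joyal--Tierney's statement. The dual assertion about fibrations is proven by running the same argument in the opposite model category, using cofibrant replacements and a factorization as a cofibration followed by a trivial fibration to produce a square whose left vertical map is a cofibration between cofibrant objects.
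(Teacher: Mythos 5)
Your argument is correct, and in fact the paper offers no proof of its own here: Lemma~\ref{lemma3} is simply quoted from Joyal--Tierney \cite[Lemma 7.14]{JT}, so there is no internal proof to compare against. Your construction (fibrant replacement $r\colon B\to B^{\mathrm f}$, factorisation of $rf$ as $p\circ j$, the lift $h$ against the fibration $p$ between fibrant objects) is the standard route, and you are right that the crux is passing from ``$hf$ and $ph$ are weak equivalences'' to ``$f$ is a weak equivalence'', which genuinely requires more than 2-out-of-3. The only point to tighten is your parenthetical justification of the 2-out-of-6 property: closure under retracts together with the factorisation axioms is not by itself a proof of 2-out-of-6. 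The clean justification is Quillen's saturation theorem, which says that the weak equivalences of a model category are exactly the morphisms that become isomorphisms in $\Ho(\C)$; since isomorphisms satisfy 2-out-of-6, so do weak equivalences. Equivalently, you can finish without invoking 2-out-of-6 as a black box: in $\Ho(\C)$ the relation $[p][h]=[r]$ gives $[h]$ a left inverse and $[h][f]=[j]$ gives it a right inverse, so $[h]$ is an isomorphism, hence so is $[f]=[h]^{-1}[j]$, and saturation then yields that $f$ is a weak equivalence. With that reference (or argument) supplied, your proof, together with the dual argument in $\C^{\mathrm{op}}$ for the fibration statement, is complete.
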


\begin{proposition}\label{prop:testobject}
Let $(\otimes, \Hom_r, \Hom_l)$ be an adjunction of two variables from $\C
\times \D$ to $\E$ between model categories. Suppose that if $g$ is a
cofibration (respectively trivial cofibration) in $\D$ and $h$ is a trivial
fibration  (respectively fibration) in $\E$, then $\Hom_r^\square(g,h)$ is a
trivial fibration in $\C$. Then the following are equivalent:
\begin{itemize}
\item[{\rm (i)}] $(\otimes, \Hom_r, \Hom_l)$ is a Quillen adjunction of
    two variables.
\item[{\rm (ii)}] Given a cofibration $g$ in $\D$ and a fibration between
    fibrant objects $\hat{h}$ in $\E$, the morphism
    $\Hom_r^\square(g,\hat{h})$ is a fibration in $\C$.
\item[{\rm (iii)}] Given a cofibration between cofibrant objects
    $\tilde{g}$ in $\D$ and a fibration $h$ in $\E$, the morphism
    $\Hom_r^\square(\tilde{g},h)$ is a fibration in $\C$.
\item[{\rm (iv)}]  Given a cofibration between cofibrant objects
    $\tilde{g}$ in $\D$ and a fibration between fibrant objects $\hat{h}$
    in $\E$, the morphism $\Hom_r^\square(\tilde{g},\hat{h})$ is a
    fibration in $\C$.
\end{itemize}
\end{proposition}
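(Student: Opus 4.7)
The proof reduces to a single non-trivial implication. First, (i) implies each of (ii), (iii), (iv) directly: by Lemma \ref{lemma2}(ii)(b), being a Quillen adjunction of two variables is equivalent to $\Hom_r^\square(g,h)$ being a fibration whenever $g$ is a cofibration in $\D$ and $h$ a fibration in $\E$, and (ii)--(iv) are just restrictions of this. Moreover (ii) and (iii) each imply (iv), since (iv) asks for the same conclusion under strictly more restrictive hypotheses. So it suffices to prove (iv) $\Rightarrow$ (i).

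Before doing so I would unpack the standing hypothesis using Lemma \ref{lemma1}. The two instances of ``$\Hom_r^\square(g,h)$ is a trivial fibration'' translate, respectively, to: (a) $f \square g$ is a cofibration in $\E$ whenever $f$ is a cofibration in $\C$ and $g$ a cofibration in $\D$; and (b) $f \square g$ is in fact a trivial cofibration whenever in addition $g$ is a trivial cofibration. A further application of Lemma \ref{lemma1} to (b) yields that $\Hom_l^\square(f,h)$ is a fibration in $\D$ whenever $f$ is a cofibration in $\C$ and $h$ a fibration in $\E$. In view of Lemma \ref{lemma2}, what remains for (i) is the symmetric form of (b): if $j$ is a trivial cofibration in $\C$ and $g$ is a cofibration in $\D$, then $j \square g$ is a trivial cofibration in $\E$.

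Fix such $j$ and $g$. By (a), $j \square g$ is already a cofibration, so by Lemma \ref{lemma3} it suffices to verify $(j \square g) \pitchfork \hat h$ for every fibration between fibrant objects $\hat h$ in $\E$. By Lemma \ref{lemma1} this is equivalent to $g \pitchfork \Hom_l^\square(j, \hat h)$, and as $g$ ranges over all cofibrations in $\D$ this just asks that $\Hom_l^\square(j, \hat h)$ be a trivial fibration in $\D$. Since $j$ is in particular a cofibration, $\Hom_l^\square(j, \hat h)$ is already a fibration by the derived fact above, so by the dual half of Lemma \ref{lemma3} it suffices to verify $\tilde g \pitchfork \Hom_l^\square(j, \hat h)$ for every cofibration between cofibrant objects $\tilde g$ in $\D$. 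A final use of Lemma \ref{lemma1} rewrites this as $j \pitchfork \Hom_r^\square(\tilde g, \hat h)$, which holds because (iv) makes $\Hom_r^\square(\tilde g, \hat h)$ a fibration in $\C$ and $j$ is a trivial cofibration.

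The main obstacle is simply bookkeeping: the argument only closes because the two applications of Lemma \ref{lemma3} (one to the outer pushout-product, one to the inner $\Hom_l^\square$) produce exactly the two restrictions---\emph{cofibration between cofibrant objects} and \emph{fibration between fibrant objects}---that appear in hypothesis (iv).
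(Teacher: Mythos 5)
Your proof is correct and takes essentially the same approach as the paper: the same interplay of Lemma~\ref{lemma1}, the consequences of the standing hypothesis recorded in Lemma~\ref{lemma2}, and the two halves of Lemma~\ref{lemma3} (fibrations between fibrant objects for the pushout-product, cofibrations between cofibrant objects for $\Hom_l^\square$) drives the argument. The only difference is organisational: you prove (iv)~$\Rightarrow$~(i) in a single pass, whereas the paper factors the same chain of adjunction and lifting arguments through (ii), proving (iv)~$\Rightarrow$~(ii) and (ii)~$\Rightarrow$~(i) separately.
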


\begin{proof}
It is clear that (i) implies (ii), (iii) and (iv), that (ii) implies (iv) and
that (iii) implies (iv). It then suffices, for example, to prove that (ii)
implies (i) and that (iv) implies (ii).

In order to prove that (ii) implies (i), let $g$ be any cofibration in $\D$
and $h$ any fibration in $\E$. Then $\Hom_r^\square(g,h)$ is a fibration in
$\C$ if and only if for every trivial cofibration $j$ in $\C$, we have that
$j \pitchfork \Hom_r^\square(g,h)$. But by Lemma \ref{lemma1}, this is
equivalent to $(j \square g)\pitchfork h$, in other words, $j \square g$
being a trivial cofibration. Since by assumption condition (i)(b) of Lemma~\ref{lemma2} holds,
Lemma~\ref{lemma2}(i) implies that $j
\square g$ is a cofibration. Hence, by Lemma~\ref{lemma3} the previous
condition is equivalent to $(j \square g) \pitchfork \hat{h}$ for $\hat{h}$
being any fibration between fibrant objets in $\E$. Again, by Lemma
\ref{lemma1} this is equivalent to $j \pitchfork \Hom_r^\square(g,\hat{h})$
for $\hat{h}$ any fibration between fibrant objects. Since we are assuming
that $\Hom_r^\square(g,\hat{h})$ is a fibration in~$\C$, the last statement
is true, so we can conclude that $\Hom_r^\square(g,h)$ is a fibration for any
cofibration $g$ and fibration $h$ as required, which was the missing part for
$(\otimes, \Hom_r, \Hom_l)$ to be a Quillen adjunction of two variables.

That part (iv) implies (ii) is proved in a very similar way to the previous
point. Let $g$ be any cofibration in $\D$ and let $\hat{h}$ be a fibration
between fibrant objects in~$\E$. Then $\Hom_r^\square(g,\hat{h})$ is a
fibration in $\C$ if and only if $j \pitchfork \Hom_r^\square(g,\hat{h})$ for
every trivial cofibration $j$ in $\C$. By Lemma \ref{lemma1} this is
equivalent to $g \pitchfork \Hom_l^\square(j,\hat{h})$ for every trivial
cofibration $j$ in $\C$. By the assumption of the proposition and Lemma~\ref{lemma2}(ii) the morphism
$\Hom_l^\square(j,\hat{h})$ is a fibration, and therefore, by
Lemma~\ref{lemma3}, it is a trivial fibration if and only if $\tilde{g}
\pitchfork \Hom^\square_l(j,\hat{h})$ for every cofibration $\tilde{g}$
between cofibrant objects in $\D$. By adjunction, this is equivalent to
saying that $j \pitchfork \Hom_r^\square(\tilde{g},\hat{h})$ for every
trivial cofibration $j$ in~$\C$, every cofibration between cofibrant objects
$\tilde{g}$ in $\D$, and every fibration  between fibrant objects $\hat{h}$
in $\E$. But $\Hom_r^\square(\tilde{g},\hat{h})$ is a fibration, by
assumption, hence (iv) is equivalent to (ii), which is what we wanted to
prove.
\end{proof}

\begin{rmk}\label{rmk:interchange}
Note that if $(\otimes, \Hom_r,\Hom_l)$ is an adjunction of two variables from $\C\times \D$ to~$\E$ and
$\tau\colon \D\times\C\to \C\times \D$ is the functor that interchanges the
components, then $(\otimes\circ\tau, \Hom_l,\Hom_r)$ is an adjunction of two
variables from $\D\times \C$ to $\E$.
\end{rmk}

\subsection{Left and right Bousfield localisation}
We recall the notion of \emph{left Bousfield localisation} and \emph{right
Bousfield localisation} (also called \emph{Bousfield colocalisation}) for
model categories; see~\cite[Chapters 3--5]{Hir03}. 

Let $\C$ be a model category with homotopy function complex
$\map_\C(-,-)$ and let $\mathcal{S}$ be a class of morphisms of $\C$ and
$\mathcal{K}$ a class of objects in $\C$. We say that an object $Z$ in $\C$
is \emph{$\mathcal{S}$-local} if it is fibrant and for every morphism
$f\colon A\to B$ in $\mathcal{S}$ the induced map
$$
f^*\colon\map_\C(B, Z)\longrightarrow \map_\C(A, Z)
$$
is a weak equivalence of simplicial sets. We say that a map $g\colon
    X\to Y$ is an \emph{$\mathcal{S}$-local equivalence} if the induced map
$$
g^*\colon\map_\C(Y, Z)\longrightarrow \map_\C(X, Z)
$$
is a weak equivalence of simplicial sets for every $\mathcal{S}$-local
object $Z$.

We say that a map $h\colon X\to Y$
in $\C$ is a \emph{$\mathcal{K}$-colocal equivalence} if for every object $K$
in $\mathcal{K}$ the induced map
$$
h_*\colon\map_\C(K,X)\longrightarrow \map_\C(K, Y)
$$
is a weak equivalence of simplicial sets. We say that an object $W$ in $\C$ is \emph{$\K$-colocal}
if it is cofibrant and for every $\K$-colocal
equivalence $h$ the induced map
$$
h_*\colon\map_\C(W, X)\longrightarrow \map_\C(W, Y)
$$
is a weak equivalence of simplicial sets.

The \emph{left Bousfield localisation} of $\C$ with respect to $\mathcal{S}$ (if it
exists) is a new model structure $L_{\mathcal{S}}\C$ on $\C$ such that
\begin{itemize}
\item[{\rm (i)}] the cofibrations of $L_{\mathcal{S}}\C$ are the same as
    those of $\C$,
\item[{\rm (ii)}] the weak equivalences of $L_{\mathcal{S}}\C$ are the $\mathcal{S}$-local equivalences,
\item[{\rm (iii)}] the fibrant objects of $L_{\mathcal{S}}\C$ are the
    $\mathcal{S}$-local objects.
\end{itemize}
The $\calS$-local equivalences between $\calS$-local objects are weak
equivalences in $\C$.

The \emph{right Bousfield localisation} (or Bousfield colocalisation) of $\C$ with
respect to~$\mathcal{K}$ (if it exists) is a new model structure
$C_{\mathcal{K}}\C$ on $\C$ such that
\begin{itemize}
\item[{\rm (i)}] the fibrations of $C_{\K}\C$ are the same as those of
    $\C$,
\item[{\rm (ii)}] the weak equivalences of $C_{\K}\C$ are the
    $\K$-colocal equivalences,
\item[{\rm (iii)}] the cofibrant objects of $C_{\K}\C$ are the
    \emph{$\K$-colocal objects}.
\end{itemize}
The $\K$-colocal equivalences between $\K$-colocal objects are weak
equivalences in $\C$.

\begin{rmk}
In~\cite[Chapter 3]{Hir03}, both left and right Bousfield localisations are defined with respect to arbitrary classes of morphisms. Here we have chosen to define right Bousfield localisations with respect to objects, since the main existence result works under the assumption that we localise at the class of $\mathcal{K}$-colocal equivalences for a set of objects $\mathcal{K}$.
\end{rmk}

\begin{rmk}\label{rmk:S_cof}
Note that the definition of the $\calS$-local objects does not depend on the chosen
homotopy function complexes, since they are unique up to homotopy and also homotopy invariant. Therefore, we can
always replace the morphisms in $ \calS$ by weakly equivalent ones consisting
of cofibrations between cofibrant objects without changing the model
structure $L_{\calS}\C$. Hence, without loss of generality we will often
assume that when we localise with respect to a class of morphisms, these
morphisms are cofibrations between cofibrant objects.

Similarly, we can assume without loss of generality that when we colocalise
with respect to a class of objects, they are cofibrant.
\end{rmk}
There are two main classes of model categories where localisations with
respect to a \emph{set} of morphisms and colocalisations with respect to a
\emph{set} of objects are always known to exist. These are the cellular model categories
and the combinatorial model categories.
For both classes the assumption of left properness is needed for the existence of
left Bousfield localisations (see \cite[Theorem 4.1.1]{Hir03} and \cite[Theorem 4.7]{Bar10}) and
right properness in needed for the existence of right Bousfield localisation (see \cite[Theorem
5.1.1]{Hir03} and \cite[Proposition 5.13]{Bar10}). If $\C$ is left proper and
combinatorial (or cellular) and $\calS$ is a set of morphisms of $\C$, then
$L_{\calS}\C$ is also left proper and combinatorial (or cellular). If $\C$ is
right proper and combinatorial (or cellular) and $\K$ is a set of objects of
$\C$, then $C_\K\C$ is also right proper, but it is not cofibrantly generated
in general.

\begin{definition}
Let $\otimes\colon\C\times\D\to \E$ be a left Quillen bifunctor, where $\D$ is
cofibrantly generated with set of generating cofibrations $I_{\D}$ and set of
cofibrant homotopy generators $\mathcal{G}_\D$. Assume that $\E$ is proper
and combinatorial and let $\calS$ and $\K$ be sets of morphisms and objects
in $\C$, respectively.
\begin{itemize}
\item[{\rm (i)}] The \emph{$\calS$-local model structure} on~$\E$,
    denoted by $L_{\calS}\E$, is the left Bousfield localisation
    $L_{\calS\square I_\D}\E$ of~$\E$ with respect to ${\calS\square
    I_\D}$.
\item[{\rm (ii)}] The \emph{$\K$-colocal model structure} on~$\E$,
    denoted by $C_{\K}\E$ is the right Bousfield localisation
    $C_{\K\otimes \mathcal{G}_\D}\E$ of $\E$ with respect to $\K\otimes
    \mathcal{G}_\D$.
\end{itemize}
\end{definition}

\begin{rmk}
If $(\otimes, \Hom_r, \Hom_l)$ is a Quillen adjunction of two variables from
$\C\times \D$ to $\E$, with $\C$ cofibrantly generated, $\E$ left proper and combinatorial, and $\calS$ is a set of morphisms in $\D$ (instead of
in $\C$), then we can also define an $\calS$-localised model structure on
$\E$ as $L_{I_{\C}\square \calS}\E$, where $I_\C$ is the set of generating
cofibrations of~$\C$. All the results from this section can be rephrased in
terms of a set of morphisms in $\D$, by suitably replacing $\Hom_l$ by
$\Hom_r$ and vice versa; see Remark~\ref{rmk:interchange}.\label{change_C_D}
\end{rmk}

\begin{theorem}\label{thm:localgenerators}
Let $(\otimes, \Hom_r, \Hom_l)$ be a Quillen adjunction of two variables from
$\C\times \D$ to $\E$. Let $\calS$ and $\K$ be classes of morphisms and
objects in $\C$, respectively. Assume that $\D$ is combinatorial with set of
generating cofibrations $I_\D$ and set of cofibrant homotopy
generators $\mathcal{G}_{\D}$ and that it is either left proper or the domains
of the elements of $I_\D$ are cofibrant.
\begin{itemize}
\item[{\rm (i)}] The following are equivalent for an object $Z$ of $\E$:
\begin{itemize}
\item[{\rm (a)}] $Z$ is $\calS\square I_\D$-local.
\item[{\rm (b)}] $Z$ is $\calS\otimes \mathcal{G}_\D$-local.
\item[{\rm (c)}] $Z$ is fibrant and $\Hom_r(G,Z)$ is $\calS$-local
    for every $G$ in $\mathcal{G}_\D$.
\item[{\rm (d)}] $Z$ is fibrant and for every $f\colon A\to B$ in
    $\calS$ the induced map
$$
f^*\colon\Hom_l(B, Z)\longrightarrow \Hom_l(A, Z)
$$
is a weak equivalence in $\D$.
\end{itemize}
\item[{\rm (ii)}] The following are equivalent for a morphism $h\colon
    X\to Y$ of $\E$:
\begin{itemize}
\item[{\rm (a)}] $h$ is a $\K\otimes\mathcal{G}_\D$-colocal
    equivalence.
\item[{\rm (b)}] For every $G$ in $\mathcal{G}_\D$ the induced map
$$
\hat{h}_*\colon \Hom_r(G,\hat X)\longrightarrow \Hom_r(G,\hat Y)
$$
is a $\K$-colocal equivalence, where $\hat{h}$ is a fibrant approximation of $h$.
\item[{\rm (c)}] For every $K$ in $\K$ the induced map
$$
\hat{h}_*\colon \Hom_l(K,\hat X)\longrightarrow \Hom_l(K,\hat Y)
$$
is a weak equivalence in $\D$, where $\hat{h}$ is a fibrant approximation of $h$.
\end{itemize}
\end{itemize}
\end{theorem}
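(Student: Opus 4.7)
The plan is to reduce both parts of the theorem to detection statements about a single auxiliary map in $\D$, and then to invoke Corollary~\ref{cor:equiv_gen} and Proposition~\ref{prop:generatorlift}. The key input will be the derived form of the pushout--product/pullback--hom adjunctions of Lemma~\ref{lemma1}, which is valid thanks to Lemma~\ref{lemma2}: it ensures that the relevant pullback-homs of our cofibrant sources and fibrant targets are fibrations between fibrant objects, so that all mapping spaces are invariant under replacement.

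For part~(i), I would first replace the morphisms in $\calS$ by cofibrations between cofibrant objects, fix such a morphism $f\colon A\to B$ and a fibrant $Z$ in~$\E$, and introduce the auxiliary map
$$
h_f := f^* = \Hom_l^\square(f,\,Z\to *) \colon \Hom_l(B, Z)\longrightarrow \Hom_l(A, Z),
$$
which by Lemma~\ref{lemma2}(ii) is a fibration between fibrant objects of~$\D$. Condition~(d) says exactly that $h_f$ is a weak equivalence for every $f\in\calS$. By Lemma~\ref{lemma1}, condition~(a) translates into the assertion that $(i, h_f)$ is a homotopy orthogonal pair for every $i\in I_\D$, and condition~(b) into the assertion that $(j_G, h_f)$ is a homotopy orthogonal pair for every $G\in\mathcal{G}_\D$. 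Corollary~\ref{cor:equiv_gen} (whose hypothesis is precisely the one imposed on~$\D$) then delivers (a) $\Leftrightarrow$ (b) $\Leftrightarrow$ (d). The equivalence (b) $\Leftrightarrow$ (c) is a direct consequence of the other half of the bifunctor adjunction: since $G$ is cofibrant and $Z$ fibrant, $\Hom_r(G, Z)$ is fibrant in $\C$ and the derived adjunction
$$
\map_\E(f\otimes G, Z) \simeq \map_\C(f,\Hom_r(G, Z))
$$
shows that $\calS\otimes\mathcal{G}_\D$-locality of $Z$ amounts to $\calS$-locality of each $\Hom_r(G, Z)$.

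Part~(ii) should follow the same strategy after replacing $h$ by a fibrant replacement $\hat h$ and taking cofibrant representatives $K\in\K$ and $G\in\mathcal{G}_\D$, so that $K\otimes G$ is cofibrant in~$\E$. The two halves of the derived bifunctor adjunction then provide
$$
\map_\E(K\otimes G,\hat h) \simeq \map_\C(K,\Hom_r(G,\hat h)) \simeq \map_\D(G,\Hom_l(K,\hat h)).
$$
Letting $K$ and $G$ range, the first equivalence produces (a) $\Leftrightarrow$ (b) immediately from the definition of $\K$-colocal equivalence, and the second, combined with Proposition~\ref{prop:generatorlift} applied to the map $\Hom_l(K,\hat h)$ between fibrant objects of~$\D$, produces (a) $\Leftrightarrow$ (c).

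The main obstacle I anticipate is the careful passage from the strict pushout--product/pullback--hom adjunctions to their derived analogues: one has to verify that each source is cofibrant and each target fibrant, so that the homotopy function complexes compute the intended derived maps. Once this bookkeeping is in place, the theorem reduces entirely to the detection results recalled in Section~1.
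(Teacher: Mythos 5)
Your proposal is correct and follows essentially the same route as the paper: translate locality/colocality through the derived two-variable adjunction into homotopy orthogonality statements about $\Hom_l^\square(f,Z\to *)$ (respectively $\Hom_l(K,\hat h)$, $\Hom_r(G,\hat h)$) in $\D$ and $\C$, then apply Corollary~\ref{cor:equiv_gen} and Proposition~\ref{prop:generatorlift}. The only nitpick is bookkeeping of citations: the equivalence with (d) needs Proposition~\ref{prop:generatorlift} (which you list as an input) rather than Corollary~\ref{cor:equiv_gen} alone, and the fact that $\Hom_l^\square(f,Z\to *)$ is a fibration between fibrant objects follows from the Quillen bifunctor axiom via Lemma~\ref{lemma1} rather than literally from Lemma~\ref{lemma2}(ii); neither affects the argument.
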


\begin{proof}
We will prove part (i) first. Let $Z$ be any object of $\E$. Then $Z$ is $\calS\square I_\D$-local if and
only if it is fibrant and
$$
\map_{\E}(B\otimes Y, Z)\longrightarrow \map_{\E}(A\otimes Y\coprod_{A\otimes X} B\otimes X, Z)
$$
is a weak equivalence of simplicial sets for every map $A\to B$ in $\calS$
and every map $X\to Y$ in $I_\D$. By adjunction and the compatibility of
homotopy function complexes with Quillen pairs (see \cite[Proposition
17.4.16]{Hir03}), the previous condition is equivalent to the diagram
$$
\xymatrix{
\map_{\D}(Y, \Hom_l(B, Z))\ar[r]\ar[d] & \map_{\D}(Y, \Hom_l(A, Z))\ar[d] \\
\map_{\D}(X, \Hom_l(B, Z))\ar[r] & \map_{\D}(X, \Hom_l(A, Z))
}
$$
being a homotopy fiber square. This is the same as saying that for every
morphism $A\to B$ in $\calS$ and every morphism $X\to Y$ in~$I_{\D}$, the
pair given by the morphisms $X\to Y$ and $\Hom_l(B, Z)\to \Hom_l(A, Z)$ is a
homotopy orthogonal pair.

By Corollary~\ref{cor:equiv_gen} the previous condition amounts to saying
that the pair given by $\emptyset\to G$ and $\Hom_l(B, Z)\to \Hom_l(A, Z)$ is
a homotopy orthogonal pair for every $G$ in $\mathcal{G}_\D$, that is,
$$
\map_\D(G, \Hom_l(B,Z))\longrightarrow \map_\D(G, \Hom_l(A,Z))
$$
is a weak equivalence. Again by adjunction and the compatibility of homotopy
function complexes with Quillen adjunctions, this is equivalent to saying
that
$$
\map_\E(B\otimes G, Z)\longrightarrow \map_\E(A\otimes G, Z)
 $$
is a weak equivalence for every $G$ in $\mathcal{G}_{\D}$, and this is
precisely the condition of $Z$ being $\calS\otimes \mathcal{G}_\D$-local.
This proves that (a) and (b) are equivalent.

By adjunction (b) is equivalent to the fibrancy of $Z$ and the fact that
$$
\map_\C(B,\Hom_r(G,Z))\longrightarrow \map_\C(A, \Hom_r(G,Z))
$$
is a weak equivalence for every map $A\to B$ in $\calS$. Hence (b) and (c)
are equivalent.

Now, Proposition~\ref{prop:hom_gen_detect} shows that (b) is equivalent to
$\Hom_l(B, Z)\to \Hom_l(A, Z)$ being a weak equivalence in $\D$, which
concludes the proof of part (i).

To prove part (ii), first observe that a morphism $h\colon X\to Y$ is a
$\K\otimes\mathcal{G}_\D$-colocal equivalence if and only if $\hat h\colon
\hat X\to \hat Y$ is a $\K\otimes\mathcal{G}_\D$-colocal equivalence. By definition,
this means that
$$
\map_\E(K\otimes G, \hat X)\longrightarrow \map_\E(K\otimes G, \hat Y)
$$
is a weak equivalence of simplicial sets for every $K$ in $\K$ and every $G$
in $\mathcal{G}_\D$. As in the proof of part (i), by adjunction and the
compatibility of homotopy function complexes with Quillen adjunctions, this
is equivalent to  saying that
$$
\map_\C(K,\Hom_r(G,\hat X))\longrightarrow\map_\C(K,\Hom_r(G, \hat Y))
$$
is a weak equivalence for every $K$ in $\K$ and every $G$ in
$\mathcal{G}_\D$, or that
$$
\map_\D(G,\Hom_l(K,\hat X))\longrightarrow\map_\D(G,\Hom_l(K, \hat Y))
$$
is a weak equivalence for every $K$ in $\K$ and every $G$ in
$\mathcal{G}_\D$, which follows from Proposition~\ref{prop:generatorlift}.
\end{proof}

\begin{corollary}
\label{cor:equivalence} Let $\C$, $\D$ and $\E$ be left proper combinatorial
model categories and let $\otimes\colon\C\times \D\to \E$ be a left Quillen
bifunctor. Let $\calS$ be a set of morphisms in $\C$ and let $\mathcal{G}_\D$ be
a set of cofibrant homotopy generators of $\D$. Then
$L_{\calS}\E=L_{\calS\otimes \mathcal{G}_\D}\E$, where as before
$L_{\calS}\E$ means $L_{\calS\square I_{\D}}\E$.
\end{corollary}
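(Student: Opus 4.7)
The plan is to show that the two left Bousfield localisations in question coincide by checking they have the same class of local (equivalently, fibrant) objects, using the fact that a left Bousfield localisation of $\E$ is determined by its cofibrations together with its local objects.

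First I would verify both localisations exist. Since $\E$ is left proper combinatorial, \cite[Theorem 4.7]{Bar10} ensures that both $L_{\calS\square I_\D}\E$ and $L_{\calS\otimes \mathcal{G}_\D}\E$ are left proper combinatorial model structures on $\E$; note that $L_\calS\E$ is by Definition 2.8(i) nothing other than $L_{\calS\square I_\D}\E$. The set $\mathcal{G}_\D$ exists by the Dugger proposition quoted in Section 1, since $\D$ is combinatorial.

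The key step is to invoke Theorem \ref{thm:localgenerators}(i): the hypotheses are satisfied because $\D$ is combinatorial and left proper, so conditions (a) and (b) there are equivalent. This gives that an object $Z\in \E$ is $\calS\square I_\D$-local if and only if it is $\calS\otimes \mathcal{G}_\D$-local. Hence the two model structures have the same fibrant objects.

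To conclude, I would note that both model structures share the cofibrations of $\E$ and the same class of fibrant objects, and hence they coincide. Indeed, the weak equivalences of a left Bousfield localisation are precisely the maps $g\colon X\to Y$ such that $g^*\colon \map_\E(Y,Z)\to \map_\E(X,Z)$ is a weak equivalence for every local $Z$; this class depends only on $\E$ and on the class of local objects. So equal fibrant objects force equal weak equivalences, and the two model structures agree. The substantive content is Theorem \ref{thm:localgenerators}(i); there is no real obstacle beyond unpacking definitions.
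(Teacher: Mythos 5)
Your proposal is correct and follows the same route as the paper, which simply deduces the corollary from Theorem \ref{thm:localgenerators}(i): since both localisations have the cofibrations of $\E$ and, by that theorem, the same local (fibrant) objects, their weak equivalences and hence the model structures coincide. You have merely spelled out the existence statements and the standard fact that a left Bousfield localisation is determined by its local objects, which the paper leaves implicit.
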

\begin{proof}
The result follows immediately from Theorem~\ref{thm:localgenerators}.
\end{proof}

Recall that the left Bousfield localisation of a left proper combinatorial model category at a set of morphisms is cofibrantly generated.
\begin{proposition}\label{prop:lurie}
Let $\C$ be a left proper combinatorial model category and $\calS$ a set of morphisms in $\C$. If
$J_\calS$ is a set of generating trivial cofibrations of $L_\calS\C$, then
$L_\calS\C=L_{J_\calS}\C$.
\end{proposition}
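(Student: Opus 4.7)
The plan is to show that $L_{\calS}\C$ and $L_{J_{\calS}}\C$ coincide as model structures by proving that they have the same class of fibrant objects. Both are left Bousfield localisations of $\C$, so they share the class of cofibrations; since a left Bousfield localisation of a model category is determined by its fibrant objects, this will suffice.

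First I would check that every $\calS$-local object $Z$ is $J_{\calS}$-local. Such a $Z$ is fibrant in $L_{\calS}\C$, hence fibrant in $\C$, and every $f \colon A \to B$ in $J_{\calS}$ is a trivial cofibration in $L_{\calS}\C$, so in particular an $\calS$-local equivalence (between cofibrant objects, after applying the standard cofibrant-replacement remark from Section~2.2). The defining property of $\calS$-local objects then yields that $\map_\C(B,Z) \to \map_\C(A,Z)$ is a weak equivalence of simplicial sets, which is precisely the $J_{\calS}$-locality condition for this $f$.

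Conversely, I would show that every $J_{\calS}$-local $Z$ is $\calS$-local. Since $J_{\calS}$ is a set of generating trivial cofibrations of $L_{\calS}\C$, the object $Z$ will be $\calS$-local once I verify that $Z \to \ast$ is a fibration in $L_{\calS}\C$, i.e.\ has the right lifting property with respect to every $f \colon A \to B$ in $J_{\calS}$. For such an $f$ (which may be taken to be a cofibration between cofibrant objects), the map $\map_\C(B,Z) \to \map_\C(A,Z)$ is a Kan fibration because $f$ is a cofibration and $Z$ is $\C$-fibrant; by $J_{\calS}$-locality it is also a weak equivalence, hence a trivial Kan fibration, and therefore surjective on $0$-simplices. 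This surjectivity is exactly the strict lifting $f \pitchfork (Z \to \ast)$, completing the argument.

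The main technical point is the last step, where strict lifting must be extracted from a mapping-space equivalence; this rests on identifying the $0$-simplices of $\map_\C(A,Z)$ with honest morphisms $A \to Z$ in~$\C$, which can always be arranged by choosing cosimplicial and simplicial frames with $\widetilde{A}_0 = A$ and $\widehat{Z}_0 = Z$.
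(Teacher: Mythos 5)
Your reduction to ``same cofibrations and same fibrant objects'' is legitimate, and your first direction ($\calS$-local $\Rightarrow$ $J_\calS$-local) is correct. The gap is in the converse, and it sits exactly at the step you yourself flag as the main technical point. The maps in $J_\calS$ are trivial cofibrations of $L_\calS\C$, hence cofibrations of $\C$, but nothing guarantees that their domains and codomains are cofibrant, and you are not free to change them: the characterisation ``$Z$ is fibrant in $L_\calS\C$ iff $Z\to\ast$ has the right lifting property with respect to $J_\calS$'' holds because $J_\calS$ is the \emph{given} generating set, and it is lost if you replace its elements by weakly equivalent cofibrations between cofibrant objects. Your parenthetical ``which may be taken to be a cofibration between cofibrant objects'' conflates the homotopy-invariant locality condition (where such replacement is allowed) with the strict lifting property (where it is not). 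For the same reason the proposed choice of frames with $\widetilde{A}_0=A$ and $\widehat{Z}_0=Z$ is not available in general: a cosimplicial resolution is Reedy cofibrant, so its degree-zero object is cofibrant; for a non-cofibrant $A$ you can have either the strict identification of $0$-simplices with $\C(A,Z)$ or the homotopical properties you use (that the simplicial set computes $\map_\C(A,Z)$ and that the induced map is a Kan fibration), but not both. Put differently, what $J_\calS$-locality gives you on $\pi_0$ is a bijection $\Ho(\C)(B,Z)\to\Ho(\C)(A,Z)$; when $B$ is not cofibrant a class in $\Ho(\C)(B,Z)$ need not be represented by an actual morphism $B\to Z$ in $\C$, so the surjectivity does not produce the strict lift $f\pitchfork(Z\to\ast)$.

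For comparison: the paper does not argue this at all, it simply cites Lurie (Higher Topos Theory, Proposition A.3.7.4), whose hypotheses include left properness. A correct self-contained proof of your second direction typically runs as follows: factor $Z\to\ast$ by the small object argument on $J_\calS$ as a relative $J_\calS$-cell complex $j\colon Z\to \widehat{Z}$ followed by a map with the right lifting property against $J_\calS$, so that $\widehat{Z}$ is $\calS$-local; use left properness and closure of the class of maps seen as equivalences by $Z$ under cobase change and transfinite composition to show that the $J_\calS$-local object $Z$ also sees $j$ as an equivalence; deduce that $j$ becomes invertible in $\Ho(\C)$, hence is a weak equivalence in $\C$; since $j$ is then a trivial cofibration in $\C$ with $Z$ fibrant in $\C$, it admits a retraction, so $Z$ is a retract of the $\calS$-local object $\widehat{Z}$ and is therefore fibrant in $L_\calS\C$. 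Your mapping-space extraction of a strict lift does work when the elements of $J_\calS$ are cofibrations between cofibrant objects (this is essentially the standard ``local objects are fibrant'' argument for Hirschhorn-style horns), but the proposition quantifies over an arbitrary generating set, so as written the proof has a genuine gap.
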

\begin{proof}
The argument is the same as in \cite[Proposition A.3.7.4]{Lu}, where it is proved for left proper combinatorial simplicial
model categories. In our case we have to replace the simplicial enrichment by the homotopy function complex
$\map_\C(-,-)$.

Both model structures $L_{\calS}\C$ and $L_{J_\calS}\C$ have the same cofibrations, so it is enough to check that they have the same trivial cofibrations.  The elements in $J_\calS$ are trivial cofibrations in
$L_{J_\calS}\C$. Since the set $J_\calS$ determines the trivial cofibrations of $L_\calS \C$ (these are in fact the morphisms with the left lifting property with respect to the morphisms with the right lifting property
with respect to $J_\calS$) it follows that every trivial cofibration of $L_{\calS}\C$ is a trivial cofibration
of $L_{J_\calS}\C$.

Conversely, let $f\colon X\to Y$ be a trivial cofibration in $L_{J_\calS}\C$. It is in particular a cofibration
in $L_{\calS}\C$ so it suffices to see that it is an $\calS$-local equivalence. By assumption
$$
f^*\colon \map_\C(Y, Z)\longrightarrow\map_\C(X, Z)
$$
is a weak equivalence of simplicial sets for every $Z$ that is $J_\calS$-local. But every $\calS$-local
is $J_\calS$-local, since $J_\calS$ consists of trivial cofibrations of $L_\calS\C$, so $f^*$ is a weak equivalence
for every $Z$ that is $\calS$-local.
\end{proof}

\begin{proposition}
Let $\C$, $\D$ and $\E$ be left proper combinatorial model categories and let
$\otimes\colon\C\times \D\to \E$ be a left Quillen bifunctor. Let $\calS$ be a set
of morphisms in $\C$. Then $\otimes\colon L_{\calS}\C\times \D\rightarrow
L_{\calS}\E$ is a left Quillen
    bifunctor.
\label{prop:Quillenbif}
\end{proposition}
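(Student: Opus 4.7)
The plan is to verify the three Quillen bifunctor axioms for $\otimes\colon L_\calS\C \times \D \to L_\calS\E$. Since cofibrations in the localised structures coincide with the original ones, the axiom that $f\square g$ is a cofibration for cofibrations $f,g$ is inherited from $\otimes\colon\C\times\D\to\E$. Similarly, if $f$ is a cofibration in $L_\calS\C$ and $g$ is a trivial cofibration in $\D$, then $f\square g$ is already a trivial cofibration in $\E$, hence in $L_\calS\E$. What remains is to show that $f\square g$ is a trivial cofibration in $L_\calS\E$ whenever $f$ is a trivial cofibration in $L_\calS\C$ and $g$ is a cofibration in $\D$.

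I will handle this by applying Proposition~\ref{prop:testobject} to $(\otimes,\Hom_r,\Hom_l)$ with the localised model structures on $\C$ and $\E$. Its hypothesis is automatic: trivial fibrations in $L_\calS\E$ and $L_\calS\C$ coincide with those in $\E$ and $\C$, and every fibration in $L_\calS\E$ is a fibration in $\E$, so the trivial-fibration properties of $\Hom_r^\square$ needed for the hypothesis are inherited from the Quillen bifunctor on the unlocalised structures. It then suffices to verify condition~(iv): for every cofibration between cofibrant objects $\tilde g\colon A\to B$ in $\D$ and every fibration between fibrant objects $\hat h\colon X\to Y$ in $L_\calS\E$, the map $\Hom_r^\square(\tilde g,\hat h)$ is a fibration in $L_\calS\C$.

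The main obstacle is the key claim that $\Hom_r(B,X)$ is $\calS$-local in $\C$ whenever $B$ is cofibrant in $\D$ and $X$ is $L_\calS\E$-fibrant. I intend to deduce this from Theorem~\ref{thm:localgenerators}(i)(d): that result shows that $L_\calS\E$-fibrancy of $X$ forces $\Hom_l(B_0,X)\to \Hom_l(A_0,X)$ to be a weak equivalence in $\D$ for every $f\colon A_0\to B_0$ in $\calS$. Applying $\map_\D(B,-)$ and unwinding the two-variable adjunction translates this into the statement that $f$ induces a weak equivalence $\map_\C(B_0,\Hom_r(B,X))\to \map_\C(A_0,\Hom_r(B,X))$; since $\Hom_r(B,X)$ is fibrant in $\C$ (being the image of a fibrant object under the right Quillen functor $\Hom_r(B,-)$), this is exactly $\calS$-locality.

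Granting the claim, the proof finishes as follows. The map $\Hom_r^\square(\tilde g,\hat h)$ is already a fibration in $\C$ because $\otimes\colon\C\times\D\to\E$ is a Quillen bifunctor, and the four corner objects $\Hom_r(A,X)$, $\Hom_r(B,X)$, $\Hom_r(A,Y)$, $\Hom_r(B,Y)$ are fibrant and $\calS$-local by the claim. One leg of the defining pullback, namely $\Hom_r(A,X)\to\Hom_r(A,Y)$, is a fibration in $\C$ (since $\Hom_r(A,-)$ is right Quillen), so the pullback is a homotopy pullback of $\calS$-local objects and therefore $\calS$-local. Hence $\Hom_r^\square(\tilde g,\hat h)$ is a fibration in $\C$ between $L_\calS\C$-fibrant objects, which by the standard property of left Bousfield localisations is itself a fibration in $L_\calS\C$. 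This verifies Proposition~\ref{prop:testobject}(iv) and completes the proof.
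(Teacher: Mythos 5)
Your argument is correct, but it follows a genuinely different route from the paper's. The paper's proof reduces, via \cite[Corollary 4.2.5]{Hov99}, to checking the pushout-product axiom on generating sets: taking $J_\calS$ a set of generating trivial cofibrations of $L_\calS\C$, it shows that the $J_\calS\square I_\D$-local objects coincide with the $\calS\square I_\D$-local objects by combining Theorem~\ref{thm:localgenerators}(i) with Proposition~\ref{prop:lurie} (i.e.\ $L_\calS\C=L_{J_\calS}\C$), so that every $i\square j$ with $i\in J_\calS$, $j\in I_\D$ is an $\calS\square I_\D$-equivalence. You instead bypass Proposition~\ref{prop:lurie} and the reduction to generators entirely, and run the fibration-detection argument via Proposition~\ref{prop:testobject}: you extend Theorem~\ref{thm:localgenerators}(i)(c) from homotopy generators to arbitrary cofibrant $B$ in $\D$ (your key claim that $\Hom_r(B,X)$ is $\calS$-local, deduced from part (d) by adjunction), and then use the pullback-corner argument to see that $\Hom_r^\square(\tilde g,\hat h)$ is a fibration of $\C$ between $\calS$-local objects, hence a fibration in $L_\calS\C$ by \cite[Proposition 3.3.16]{Hir03}. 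This is essentially the machinery the paper deploys only afterwards, in the proof of the familiarity criterion; as a bonus your route establishes directly that $L_\calS\E$ is $\calS$-familiar, while the paper's route is shorter once Lurie's result is available and stays at the level of generating sets. Two small points you gloss over, both standard: one should invoke the paper's remark allowing $\calS$ to consist of cofibrations between cofibrant objects so that part (d) and the adjunction translation behave as stated, and the step ``the pullback is a homotopy pullback of $\calS$-local objects and therefore $\calS$-local'' is cleanest phrased as in the paper, namely $\Hom_r(B,Y)\to\Hom_r(A,Y)$ is a fibration of $\C$ between local objects, hence a fibration in $L_\calS\C$, and the strict pullback of a fibration in $L_\calS\C$ over an $L_\calS\C$-fibrant base along a map from an $L_\calS\C$-fibrant object is again $L_\calS\C$-fibrant.
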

\begin{proof}
By \cite[Corollary 4.2.5]{Hov99} it is enough to prove that the
pushout-product axiom holds for the sets of generating cofibrations and
trivial cofibrations of $L_{\calS}\C$ and $\D$. As the cofibrations in
$L_{\calS}\C$ and $\C$ as well as the cofibrations in $L_{\calS}\E$ and $\E$
agree, it is sufficient to only consider the following case. Let $J_\calS$ be
a set of generating trivial cofibrations of $L_{\calS}\C$ and let $I_\D$ be a
set of generating cofibrations of $\D$. Since the cofibrations of
$L_{\calS}\C$ are the same as those in $\C$, it suffices to prove that if $i$
is in $J_\calS$ and $j$ is in $I_\D$, then $i\square j$ is a $\calS\square
I_{\D}$-local equivalence in $\E$. In fact, we will prove that the $J_\calS\square
I_{\D}$-local equivalences coincide with the $\calS\square I_{\D}$-local equivalences.

Let $\mathcal{G}_\D$ be a set of cofibrant homotopy generators of $\D$. By
Theorem~\ref{thm:localgenerators}(i), an object $Z$ of $\E$ is $\calS\square
I_\D$-local if and only if $\Hom_r(G,Z)$ is $\calS$-local for every $G$ in
$\mathcal{G}_D$. But by Proposition~\ref{prop:lurie}, $\calS$-local objects
coincide with $J_\calS$-local objects. Hence $\Hom_r(G,Z)$ is $J_\calS$-local
for every $G$ in $\mathcal{G}_\D$ and thus $Z$ is $J_\calS\square
I_\D$-local.
\end{proof}

\begin{proposition}\label{prop:char_S-equiv}
Let $\C$, $\D$ and $\E$ be model categories with sets of cofibrant homotopy
generators $\mathcal{G}_\C$, $\mathcal{G}_\D$ and $\mathcal{G}_\E$,
respectively. Suppose that $\D$ is left proper and combinatorial. Let
$(\otimes, \Hom_r, \Hom_l)$ be a Quillen adjunction of two variables from
$\C\times \D$ to $\E$ and let $\calS$ be a class of morphisms in $\C$. Let
$f\colon X\to Y$ be a map in $\E$ and let
    $\hat{f}\colon \hat{X}\to \hat{Y}$ be a fibrant
    approximation to~$f$ in~$L_{\mathcal{S}}\E$. If the induced map
$$
\hat{f}_*\colon\Hom_r(G, \hat{X})\longrightarrow \Hom_r(G, \hat{Y})
$$
is an $\calS$-local equivalence in $\C$ for every $G$ in $\mathcal{G}_\D$ and
$\mathcal{G}_\E\subset\mathcal{G}_\C\otimes \mathcal{G}_\D$, then $f$ is an
$\calS\otimes \mathcal{G}_\E$-local equivalence in $\E$.
\end{proposition}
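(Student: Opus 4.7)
The plan is to reduce the statement to a weak equivalence check via the homotopy generators of $\E$, translate across the two-variable adjunction, and apply part (i) of Theorem~\ref{thm:localgenerators}. First I would note that $\hat{f}$ is an $\calS$-equivalence in $\E$ if and only if $f$ is (by two-out-of-three applied to the fibrant approximation squares in $L_{\calS}\E$), and that $\hat{X}$ and $\hat{Y}$, being fibrant in $L_{\calS}\E$, are $\calS$-local objects of $\E$ in the sense of Definition~(i) of the preceding subsection (i.e., $\calS\square I_{\D}$-local). Because $\calS$-local equivalences between $\calS$-local objects are ordinary weak equivalences, it suffices to prove that $\hat{f}$ is a weak equivalence in $\E$.

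Next, I would apply Proposition~\ref{prop:hom_gen_detect} to the set of cofibrant homotopy generators $\mathcal{G}_{\E}$: the map $\hat{f}$ is a weak equivalence in $\E$ provided
$$
\hat{f}_{*}\colon\map_{\E}(G_{\E},\hat{X})\longrightarrow\map_{\E}(G_{\E},\hat{Y})
$$
is a weak equivalence of simplicial sets for every $G_{\E}\in\mathcal{G}_{\E}$. By the hypothesis $\mathcal{G}_{\E}\subset\mathcal{G}_{\C}\otimes\mathcal{G}_{\D}$, any such $G_{\E}$ may be written as $G_{\C}\otimes G_{\D}$ with both factors cofibrant, so the compatibility of homotopy function complexes with Quillen adjunctions (\cite[Proposition 17.4.16]{Hir03}) turns the previous map, up to weak equivalence, into
$$
\map_{\C}(G_{\C},\Hom_{r}(G_{\D},\hat{X}))\longrightarrow\map_{\C}(G_{\C},\Hom_{r}(G_{\D},\hat{Y})).
$$

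At this point the key input is Theorem~\ref{thm:localgenerators}(i): since $\hat{X}$ and $\hat{Y}$ are $\calS\square I_{\D}$-local, the objects $\Hom_{r}(G_{\D},\hat{X})$ and $\Hom_{r}(G_{\D},\hat{Y})$ are $\calS$-local in $\C$ for every $G_{\D}\in\mathcal{G}_{\D}$. By the assumption of the proposition, the induced map between them is an $\calS$-equivalence; since both source and target are $\calS$-local, it is therefore an honest weak equivalence in $\C$. Mapping out of the cofibrant object $G_{\C}$ into the weak equivalence of fibrant objects $\Hom_{r}(G_{\D},\hat{X})\to\Hom_{r}(G_{\D},\hat{Y})$ yields a weak equivalence of simplicial sets, and tracing back through the adjunction gives the required weak equivalence on $\map_{\E}(G_{\E},-)$.

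The only delicate point I anticipate is the bookkeeping that $L_{\calS}\E$ in the statement really means $L_{\calS\square I_{\D}}\E$ (the $\calS$-local structure from the previous definition), so that fibrant replacements there produce $\calS\square I_{\D}$-local objects and Theorem~\ref{thm:localgenerators}(i) applies cleanly; once this is in place the argument is a straightforward transfer of $\calS$-locality across the Quillen adjunction of two variables, using that fibrant objects in $\E$ push forward under $\Hom_{r}(G_{\D},-)$ to fibrant objects in $\C$ (as $G_{\D}$ is cofibrant) so that mapping-space and fibrancy hypotheses required by Proposition~\ref{prop:hom_gen_detect} are all met.
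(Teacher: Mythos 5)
Your proof is correct and follows essentially the same route as the paper: Theorem~\ref{thm:localgenerators}(i) gives $\calS$-locality of $\Hom_r(G,\hat X)$ and $\Hom_r(G,\hat Y)$, the hypothesis then yields a genuine weak equivalence in $\C$, and the adjunction together with $\mathcal{G}_\E\subset\mathcal{G}_\C\otimes\mathcal{G}_\D$ and detection of weak equivalences by homotopy generators shows $\hat f$ is a weak equivalence in $\E$, whence $f$ is an $\calS$-equivalence by two-out-of-three. The only difference is presentational (you run the reduction backwards from the conclusion), not mathematical.
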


\begin{proof}
By Theorem~\ref{thm:localgenerators}(i) the objects $\Hom_r(G, \hat{X})$ and
$\Hom_r(G, \hat{Y})$ are both $\calS$\nobreakdash-local. Thus $\hat{f}_*$ is
an $\calS$-local equivalence between $\calS$-local objects and hence a weak
equivalence in $\C$. This implies that
$$
\map_{\C}(W, \Hom_r(G,\hat{X}))\longrightarrow \map_{\C}(W,\Hom_r(G,\hat{Y}))
$$
is a weak equivalence of simplicial sets for every $W$ in $\mathcal{G}_\C$
and every $G$ in $\mathcal{G}_\D$. By adjunction and compatibility of
homotopy function complexes with Quillen functors this is equivalent to
$$
\map_{\E}(W\otimes G,\hat{X})\longrightarrow \map_{\E}(W\otimes G,\hat{Y})
$$
being a weak equivalence of simplicial sets for every $W$ in $\mathcal{G}_\C$
and every $G$ in $\mathcal{G}_\D$. Since by assumption
$\mathcal{G}_\E\subset\mathcal{G}_\C\otimes \mathcal{G}_\D$, this implies
that $\hat{f}$ is a weak equivalence in~$\E$. Now, by the 2-out-of-3 axiom
and the fact that weak equivalences in $\E$ are $\calS\otimes \mathcal{G}_\E$-local equivalences, it
follows that $f$ is an $\calS\otimes \mathcal{G}_\E$-local equivalence.
\end{proof}

The following definition is motivated by the notion of \emph{$E$-familiar model structure} described in~\cite[Section 4]{BarRoi11b}. 

\begin{definition}
Let $\C$ be a left proper combinatorial model category and let $\calS$ be a set of morphisms in $\C$. Let $\otimes\colon\C\times\D\to \E$ be a left Quillen bifunctor. We say that \emph{$\E$ is $\calS$-familiar} if
    $\otimes\colon L_{\calS}\C\times\D\to \E$ is a left Quillen bifunctor.
\end{definition}
\begin{rmk}
In particular, it follows from Proposition~\ref{prop:Quillenbif} that the
$\calS$-local model structure $L_{\calS}\E$ is $\calS$-familiar.
\end{rmk}

\begin{proposition}
Let $(\otimes, \Hom_r, \Hom_l)$ be a Quillen adjunction of two variables from
$\C\times\D$ to $\E$, where $\C$ is left proper and combinatorial, and let $\calS$ be a set of morphisms in $\C$. Then $\E$ is
$\calS$-familiar if and only if $\Hom_r(X, Y)$ is $\calS$-local for every $X$
cofibrant in $\D$ and $Y$ fibrant in $\E$.
\end{proposition}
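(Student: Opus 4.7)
The plan is to use Proposition~\ref{prop:testobject} to reduce the Quillen bifunctor condition for $\otimes\colon L_\calS\C \times \D \to \E$ to a check on $\Hom_r^\square$ evaluated on cofibrations between cofibrant objects and fibrations between fibrant objects, and then to translate back and forth across the bifunctor adjunctions. Two basic facts about left Bousfield localisation will be used throughout: the cofibrations and trivial fibrations of $L_\calS\C$ coincide with those of $\C$, and a fibration of $\C$ whose source and target are both $\calS$-local is automatically a fibration in $L_\calS\C$.

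For the direction ($\Rightarrow$), suppose $\otimes\colon L_\calS\C \times \D \to \E$ is a Quillen bifunctor, and fix $X$ cofibrant in $\D$ and $Y$ fibrant in $\E$. Since $X$ is cofibrant, $\Hom_r(X,-)$ is right Quillen, so $\Hom_r(X,Y)$ is fibrant in $\C$. Given $f\colon A\to B$ in $\calS$, the remark earlier in this section lets us replace $f$ by a weakly equivalent cofibration between cofibrant objects, and then $f$ is a trivial cofibration in $L_\calS\C$. By hypothesis $-\otimes X\colon L_\calS\C\to \E$ is left Quillen, so $f\otimes X$ is a trivial cofibration in $\E$; hence the map $\map_\E(f\otimes X,Y)$ is a weak equivalence. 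By the bifunctor adjunction this map agrees with $\map_\C(f,\Hom_r(X,Y))$, proving that $\Hom_r(X,Y)$ is $\calS$-local.

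For the converse, I will verify condition (iv) of Proposition~\ref{prop:testobject} applied to $\otimes\colon L_\calS\C \times \D \to \E$. The partial hypothesis of that proposition only concerns trivial fibrations in $L_\calS\C$, which agree with trivial fibrations in $\C$, and is therefore supplied by the Quillen bifunctor property of the original adjunction. For condition (iv), let $\tilde g\colon X\to Y$ be a cofibration between cofibrant objects in $\D$ and let $\hat h\colon U\to V$ be a fibration between fibrant objects in $\E$. The morphism
$$
\Hom_r^\square(\tilde g,\hat h)\colon \Hom_r(Y,U)\longrightarrow \Hom_r(X,U)\times_{\Hom_r(X,V)}\Hom_r(Y,V)
$$
is already a fibration in $\C$ by the original Quillen bifunctor, so it suffices to verify that its source and target are $\calS$-local. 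The source is $\calS$-local by assumption; the three objects $\Hom_r(X,U)$, $\Hom_r(X,V)$, $\Hom_r(Y,V)$ are $\calS$-local by assumption and the map $\Hom_r(X,U)\to \Hom_r(X,V)$ is a fibration between fibrant objects, so the strict pullback defining the target is a homotopy pullback of $\calS$-local objects, and is therefore $\calS$-local. Consequently $\Hom_r^\square(\tilde g,\hat h)$ is a fibration in $\C$ between $\calS$-local objects, hence a fibration in $L_\calS\C$.

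The main point to watch is the closure of $\calS$-local objects under this pullback; this rests on the standard observation that the fibrant objects of any model category are closed under homotopy limits, applied to $L_\calS\C$, together with the fact that a strict pullback along a fibration between fibrants models the homotopy pullback. Aside from this, the argument is a bookkeeping application of Proposition~\ref{prop:testobject}, Lemma~\ref{lemma1}, and the observation that restricting a Quillen bifunctor to a cofibrant object in one variable yields a left Quillen functor in the other.
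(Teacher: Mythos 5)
Your proof is correct and follows essentially the same route as the paper: both directions reduce, via Proposition~\ref{prop:testobject}, to showing that $\Hom_r^\square(\tilde g,\hat h)$ is a fibration in $\C$ between $\calS$-local objects for a cofibration between cofibrant objects and a fibration between fibrant objects, using the pullback square and the fact that a fibration in $\C$ between $\calS$-local objects is a fibration in $L_{\calS}\C$. The only cosmetic difference is in the closure step for the pullback corner: rather than appealing to ``fibrant objects are closed under homotopy limits'' (which is not quite a well-formed invariance statement), the cleaner formulation -- and the one the paper uses -- is that one leg of the square is a fibration in $L_{\calS}\C$, so its pullback is again a fibration in $L_{\calS}\C$ over an $\calS$-local object, hence has $\calS$-local total object.
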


\begin{proof}
The ``only if'' part follows from the fact that if $\E$ is $\calS$-familiar
and $X$ is cofibrant in $\D$, then the functor $\Hom_r(X,-)\colon \E\to
L_{\calS}\C$ is right Quillen. Hence, for every $Y$ fibrant in $\E$, we have
that $\Hom_r(X,Y)$ is fibrant in $L_{\calS}\C$, that is,
$\calS$-local.

Conversely, we want to show that if $\Hom_r(X,Y)$ is $\calS$-local for every
cofibrant $X$ and fibrant $Y$, then $L_{\calS}\C \times \D \to \E$ is also a
left Quillen bifunctor. Let $f$ be a cofibration (respectively, a trivial
cofibration) in $\D$ and let $g$ be a trivial fibration (respectively, a
fibration) in $\E$. Because $\C \times \D \to \E$ is assumed to be a left Quillen
bifunctor, the map $\Hom_r^{\square}(f,g)$ is a trivial fibration in $\C$ and hence
a trivial fibration in $L_{\calS}\C$ (since $\C$ and $L_{\calS}\C$ have the same cofibrations).
Therefore, by Proposition~\ref{prop:testobject} it suffices to prove that if
$f\colon A\to B$ is a cofibration between cofibrant objects in $\D$ and
$g\colon X\to Y$ is a fibration between fibrant objects in $\E$, then
$\Hom_r^{\square}(f,g)$ is a fibration in $L_{\calS}\C$.  Since it is already a fibration in $\C$, it is enough to check that the
source and target are $\mathcal{S}$-local. Consider the
pullback diagram
$$
\xymatrix{
\Hom_r(B, X)\ar[dr]^{\Hom_r^{\square}(f,g)}\ar@/^2pc/[rrd]^{g_*} \ar@/_2pc/[ddr]_{f^*} & &\\
& \Hom_r(B,Y)\times_{\Hom_r(A,Y)}\Hom_r(A, X)\ar[r]\ar[d] & \Hom_r(B, Y)\ar[d]^{f^*} \\
& \Hom_r(A, X) \ar[r]_{g_*} & \Hom_r(A, Y).
}
$$
The right vertical map $f^*$ is a fibration in $L_{\calS}\C$, since it is a
fibration in $\C$ between $\calS$\nobreakdash-local objects (see
\cite[Proposition 3.3.16]{Hir03}).  Since fibrations are closed under
pullbacks, the left vertical map is also a fibration in $L_{\calS}\C$. But
$\Hom_r(A,X)$ is  $\calS$\nobreakdash-local (that is, fibrant in
$L_{\calS}\C$) and therefore so is $$\Hom_r(B,Y)\times_{\Hom_r(A,Y)}\Hom_r(A,
X).$$

Hence, we have proved that $\Hom_r^{\square}(f,g)$ is a fibration in $\C$
between $\calS$-local objects. By \cite[Proposition 3.3.16]{Hir03} this means
that $\Hom_r^{\square}(f,g)$ is a fibration in~$L_{\calS}\C$.
\end{proof}

We have seen that for a left Quillen bifunctor $\otimes\colon\C \times \D \to \E$
and a set $\calS$ of morphisms in $\C$, the new model structure
$L_{\calS}\mathcal{E}$ on $\mathcal{E}$ gives rise to a left Quillen bifunctor
$$
\otimes\colon L_{\calS}\C \times \D \longrightarrow L_{\calS}\E.
$$
We can now state that this model structure $L_{\calS}\E$ is the ``closest''
model structure to $\E$ with this property in the following sense.

\begin{proposition}\label{prop:closest}
Let $\C$, $\D$ and $\E$ be left proper combinatorial model categories and let
$\otimes\colon\C\times \D\to \E$ be a left Quillen bifunctor. Let $F\colon \E \to
\E'$ be a left Quillen functor and $\calS$ a set of morphisms in $\C$.  If
$\E'$ is $\calS$\nobreakdash-fa\-mi\-liar with respect to the Quillen
bifunctor $F\circ \otimes\colon \C \times \D \to \E\to \E'$, then
$$
F\colon L_{\calS}\E \longrightarrow \E'
$$
is also a left Quillen functor, that is, $F$ factors over the
$\calS$-localisation of $\mathcal{E}$.
\end{proposition}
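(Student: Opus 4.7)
The plan is to invoke the universal property of the left Bousfield localisation (\cite[Theorem 3.3.20]{Hir03}): since the cofibrations of $L_\calS\E$ coincide with those of $\E$ and $F$ is already left Quillen on $\E$, it suffices to verify that $F$ takes each morphism of the localising set $\calS\square I_\D$ to a weak equivalence in $\E'$. In other words, given the cofibration part of left Quillen is automatic, the whole result reduces to checking $F$ trivialises the maps by which $\E$ was localised to form $L_\calS\E$.

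By the remark preceding the definition of the $\calS$-local model structure, I may assume without loss of generality that every $s\colon A\to B$ in $\calS$ is a cofibration between cofibrant objects of $\C$; in particular $s$ is a trivial cofibration in $L_\calS\C$. Given any $j\in I_\D$, which is a cofibration in $\D$, the hypothesis that $F\circ\otimes\colon L_\calS\C\times\D\to\E'$ is a Quillen bifunctor yields, via the pushout-product axiom applied to the trivial cofibration $s$ and the cofibration $j$, that $(F\circ\otimes)(s\square j)$ is a trivial cofibration in $\E'$. Because $F$ is a left adjoint and therefore preserves the pushout defining $s\square j$, we have $(F\circ\otimes)(s\square j) = F(s\square j)$, so $F(s\square j)$ is in particular a weak equivalence in $\E'$, as desired.

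Combining these two observations, $F\colon L_\calS\E\to\E'$ preserves cofibrations and sends the localising set $\calS\square I_\D$ to weak equivalences, and is hence left Quillen. I do not anticipate any serious obstacle: the argument is essentially a direct translation of the $\calS$-familiarity of $\E'$ into the left Quillen condition on $L_\calS\E$. The main points to watch are the legitimacy of replacing $\calS$ by cofibrations between cofibrant objects (so that each $s$ is a genuine trivial cofibration in $L_\calS\C$, allowing us to invoke the pushout-product axiom for the Quillen bifunctor $F\circ\otimes$) and the identification of the localising set for $L_\calS\E$ as $\calS\square I_\D$, so that the universal property can be applied to exactly this set.
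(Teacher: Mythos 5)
Your overall strategy---reduce to the universal property of the left Bousfield localisation and then feed the pushout-product axiom for $F\circ\otimes\colon L_\calS\C\times\D\to\E'$ into it---is sound in spirit, and your computation is fine where it applies: the replacement of $\calS$ by cofibrations between cofibrant objects is legitimate, such maps are trivial cofibrations in $L_\calS\C$, and since $F$ is a left adjoint it preserves the defining pushout, so $(F\circ\otimes)(s\square j)=F(s\square j)$ is indeed a trivial cofibration in $\E'$. The gap is in the first step, where you assert that it suffices for $F$ to send each morphism of the localising set $\calS\square I_\D$ itself to a weak equivalence. The criterion of \cite[Proposition 3.3.18 and Theorem 3.3.20]{Hir03} is a \emph{derived} one: $F$ must send a cofibrant approximation of each localising map to a weak equivalence, equivalently $\mathbb{L}F$ must invert its image in $\Ho(\E)$. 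For maps between cofibrant objects the two conditions agree, but the elements of $\calS\square I_\D$ need not be of this kind: the domains of the generating cofibrations $I_\D$ of a combinatorial model category need not be cofibrant (this is precisely why Theorem~\ref{thm:localgenerators} carries the hypothesis ``left proper or the domains of the elements of $I_\D$ are cofibrant''), and then the domain $B\otimes X\coprod_{A\otimes X}A\otimes Y$ of $s\square j$ need not be cofibrant in $\E$. Since a left Quillen functor does not in general preserve weak equivalences between non-cofibrant objects, knowing that $F(s\square j)$ is a trivial cofibration does not verify the derived condition, so the universal property cannot be invoked exactly as you state it.

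The repair is the move the paper makes: by Corollary~\ref{cor:equivalence} one has $L_{\calS}\E=L_{\calS\otimes\mathcal{G}_\D}\E$, where $\mathcal{G}_\D$ is a set of \emph{cofibrant} homotopy generators of $\D$, and, with $\calS$ already replaced by cofibrations between cofibrant objects, each $f\otimes G$ is a cofibration between cofibrant objects of $\E$, so the underived and derived conditions coincide for this localising set. Your argument then goes through verbatim with $j$ replaced by $\emptyset\to G$: the pushout-product of the trivial cofibration $f$ of $L_\calS\C$ with the cofibration $\emptyset\to G$ is exactly $f\otimes G$, whence $F(f\otimes G)$ is a trivial cofibration in $\E'$. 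The paper phrases this last step via Ken Brown's lemma applied to the left Quillen functor $F(-\otimes G)$, but the content is the same; with this single change of localising set your proof coincides with the paper's.
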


\begin{proof}
By Corollary~\ref{cor:equivalence} we have that $L_{\calS}\E=L_{\calS\otimes
\mathcal{G}_\D}\E$, where $\mathcal{G}_\D$ is a set of cofibrant homotopy
generators of $\D$. Thus, by \cite[Proposition 3.3.18]{Hir03} it is enough to
show that $F(f\otimes G)$ is a weak equivalence in $\E'$ for every $f$ in
$\calS$ and $G$ in $\mathcal{G}_\D$. But, by assumption, $F\circ \otimes
\colon L_{\calS}\times \D\to \E'$, is a left Quillen bifunctor. Hence $F(f\otimes
G)$ is a weak equivalence in $\E'$ since $f$ is a weak equivalence in
$L_{\calS}\C$ between cofibrant objects and $G$ is cofibrant in $\D$ (recall that, by Remark~\ref{rmk:S_cof}, we can assume without loss of
generality that the morphisms in $\calS$ are cofibrations between cofibrant objects).
\end{proof}

\subsection{Examples}
\subsubsection{Enriched localisations and colocalisations}
Let $\V$ be a monoidal model category and let $\C$ be a $\V$-enriched model
category. Then there is a Quillen adjunction of two variables $\C\times \V\to
\C$. If $\V$ is combinatorial, $\C$ is left proper combinatorial and $\calS$
is a set of morphisms in $\C$, then the $\calS$-localised model structure (see
Remark~\ref{change_C_D}) is the \emph{$\V$-enriched left Bousfield
localisation} of $\C$ with respect to $\calS$, as in \cite[Definition
4.42]{Bar10}. Similarly if $\K$ is a set of objects in $\C$, then the
$\K$-colocalised model structure of $\C$ along the left Quillen bifunctor is the
\emph{enriched right Bousfield localisation} of $\C$ with respect to $\K$.

If $\mathcal{V}=\sset$, the category of simplicial sets, then we recover left
and right Bousfield localisations of simplicial model categories.

\subsubsection{Familiarisations}
\label{familiarisation} Let $\C$ be a spectral model category, that is, a model category
which is compatibly enriched over the model category $\Sp$ of symmetric spectra. Then there is a
Quillen adjunction of two variables $\C\times \Sp\to \C$. Let $E$ be any spectrum and let
$\calS_E$ be the set of generating trivial cofibrations of the $E$\nobreakdash-local
model structure $L_E\Sp$; see \cite[Section1]{BarRoi11b} or \cite[Section 2]{BR14}. Then the $\calS_E$-localised model structure on
$\C$ is the \emph{$E$-familiarisation} of $\C$ in the sense of \cite[Section
5]{BR}.

If $\calS$ is a set of morphisms in $\Sp$, then we call the $\calS$-localised
model structure on~$\C$ the \emph{stable $\calS$-familiarisation}.

\section{Postnikov sections of model categories}
\label{k-types}

We are going to apply a construction closely related to our localisation construction to obtain Postnikov
sections in combinatorial model categories. We start by reviewing the
classical case of topological spaces and then explain how we can use our
construction to generalise this concept to arbitrary combinatorial model
categories which are not necessarily simplicial.

\subsection{The classical case: spaces}
We are going to recall some results for Postnikov towers and $k$-types in
simplicial sets. For details, see \cite[Section 1.5]{Hir03}. Note that
in~\cite{Hir03} this is formulated for topological spaces rather than
simplicial sets, but due to the compatibility of localisation with the
geometric realisation and total singular complex functors this will not be an
issue; see \cite[Section 1.6]{Hir03}.

Let $f_k\colon S^{k+1} \to D^{k+2}$ denote the boundary inclusion in $\sset$ from the
$(k+1)$-sphere to the $(k+2)$-disk. We form the left Bousfield localisation
of $\sset$ with respect to this map, obtaining the model structure
$L_{f_k}\sset$. This is called the \emph{model structure for $k$-types} of simplicial
sets. In fact, a simplicial set $X$ is $f_k$-local if and only if it is a Kan
complex and its homotopy groups vanish in degrees $k+1$ and higher, for every
choice of basepoint in $X$. The localisation map
$$
l_k\colon X \longrightarrow L_{f_k}X,
$$
which is defined as the fibrant replacement of $X$ in $L_{f_k}\sset$, is a
$\pi_i$-isomorphism for $i\le k$ and every choice of a basepoint in $X$.

\begin{rmk}
The model category $L_{f_k}\sset$ exists and is cellular (hence cofibrantly generated), since it
is a left Bousfield localisation of a left proper cellular model
category; see for example \cite[Theorem 4.1.1]{Hir03}.
\end{rmk}

\begin{proposition}\label{prop:weakequivalences}
If a map of fibrant simplicial sets $X\to Y$ is a $\pi_i$-isomorphism for $i
\le k$ and every choice of a basepoint in $X$, then it is an
$f_k$-local equivalence, that is, a weak equivalence in $L_{f_k}\sset$.
\end{proposition}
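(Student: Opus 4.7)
The plan is to compare $f \colon X \to Y$ with the map it induces on fibrant replacements in $L_{f_k}\sset$. Let $l_k^X \colon X \to L_{f_k} X$ and $l_k^Y \colon Y \to L_{f_k} Y$ be such fibrant replacements, and consider the naturality square with vertical arrows $l_k^X, l_k^Y$ and horizontal arrows $f$ and $L_{f_k} f$. Both vertical maps are weak equivalences in $L_{f_k}\sset$, i.e.\ $f_k$-equivalences, so by the 2-out-of-3 property it suffices to show that $L_{f_k} f$ is an $f_k$-equivalence. In fact I will show the stronger statement that $L_{f_k} f$ is a weak equivalence in $\sset$.

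The targets $L_{f_k} X$ and $L_{f_k} Y$ are $f_k$-local, hence Kan complexes whose homotopy groups vanish in degrees $\ge k+1$ for every basepoint. For the lower degrees, one combines the hypothesis on $f$ with the fact, recalled just before the statement, that $l_k^X$ and $l_k^Y$ are $\pi_i$-isomorphisms for $i \le k$ at every choice of basepoint. Commutativity of the naturality square then forces $L_{f_k} f$ to be a $\pi_i$-isomorphism for $i \le k$ at every basepoint of the form $l_k^X(x)$ with $x \in X$. Since $l_k^X$ is a $\pi_0$-isomorphism, every component of $L_{f_k} X$ meets the image, and the corresponding statement for $L_{f_k} Y$ follows from the $\pi_0$-isomorphism $L_{f_k} f$. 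Hence $L_{f_k} f$ is a $\pi_*$-isomorphism of Kan complexes, so a weak equivalence in $\sset$ and in particular an $f_k$-equivalence.

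The main, very mild, obstacle is the basepoint bookkeeping: the hypothesis controls $\pi_i$ only at basepoints of $X$, while the conclusion requires information at all basepoints of $L_{f_k} X$. The $\pi_0$-surjectivity of $l_k^X$ bridges the gap, after which the argument is entirely formal.
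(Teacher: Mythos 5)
Your argument is correct, but it is genuinely different in form from what the paper does: the paper offers no argument at all and simply cites \cite[Propositions 1.5.2 and 1.5.4]{Hir03}, whereas you derive the statement from the two facts the paper recalls just beforehand, namely the characterisation of $f_k$-local objects (Kan complexes with vanishing $\pi_i$ for $i\ge k+1$ at all basepoints) and the fact that $l_k\colon X\to L_{f_k}X$ is a $\pi_i$-isomorphism for $i\le k$. Your reduction is clean: the square involving $f$ and $L_{f_k}f$ with the two localisation maps, 2-out-of-3 in $L_{f_k}\sset$, and then the observation that a map between $f_k$-local objects which is a $\pi_i$-isomorphism for $i\le k$ at enough basepoints is an honest weak equivalence in $\sset$. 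Two small points are worth making explicit if you write this up: first, to get the commuting square you should either use a functorial fibrant replacement in $L_{f_k}\sset$ (available since the localisation is combinatorial/cellular) or, equivalently, obtain $L_{f_k}f$ as a lift of $l_k^Y\circ f$ along the trivial cofibration $l_k^X$ against the $f_k$-local object $L_{f_k}Y$; second, the basepoint transport you gesture at uses the naturality of change-of-basepoint isomorphisms along paths, together with the fact that for a map of Kan complexes it suffices to check $\pi_0$-bijectivity and $\pi_i$-isomorphisms at basepoints of the source. Finally, be aware of what your proof buys and what it does not: the input that $l_k$ is a $\pi_i$-isomorphism for $i\le k$ is itself the substantive content (in \cite{Hir03} it comes from constructing the localisation by attaching cells of dimension at least $k+2$), so your argument is not independent of Hirschhorn but is a correct formal derivation from the facts already quoted in this section, which is entirely adequate for the paper's purposes.
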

\begin{proof}
This is \cite[Propositions 1.5.2 and 1.5.4]{Hir03}.
\end{proof}

As a consequence of the above, we see that the localisation map $l_k$ of a
simplicial set $X$ to its $f_k$-localisation is nothing but the projection of
$X$ onto its $k$th Postnikov section $P_kX$; see~\cite[Theorem 1.5.3]{Hir03}. For details on Postnikov
sections, see for instance \cite[VI.3]{GJ99} or \cite[Section 4.3]{Hatcher}.

If $i\ge j$, then $P_j X$ is fibrant in $L_{f_i}\sset$, that is, $P_j X$ is $f_i$-local. Hence, there is a commutative triangle
\[
\xymatrix{X \ar[d]_{l_i} \ar[r]^-{l_j} & P_jX  \\
P_iX, \ar[ur]  & \\
}
\]
since, by definition, $l_i$ is a trivial cofibration in  $L_{f_i}\sset$.

\medskip
Furthermore, let $X \rightarrow Y$ be a weak equivalence in $L_{f_k}\sset$.
Consider the commutative square
\[
\xymatrix{ X \ar[r]\ar[d] & Y \ar[d] \\
P_k X \ar[r] & P_k Y.}
\]
We know that the vertical maps are $\pi_i$-isomorphisms for $i \le k$ by
definition. As the top horizontal and the two vertical maps are
$f_k$-local equivalences, then so is the map $P_k X \to P_k Y$. But of course $P_k
X$ and $P_k Y$ are $f_k$-local, so the bottom map is in fact a
$\pi_i$\nobreakdash-isomorphism for all $i$ and every choice of basepoint. Thus, any weak equivalence in
$L_{f_k} \sset$ is a $\pi_i$\nobreakdash-isomorphism for $i \le k$. Together with
Proposition~\ref{prop:weakequivalences} we can conclude that a map $X \to Y$ between fibrant simplicial sets is a
weak equivalence in $L_{f_k}\sset$ if and only if it is a $\pi_i$-isomorphism
for~$i \le k$ and every choice of basepoint.

\subsection{The general case}
\label{sect:k-types_model} Let $\C$ be now a simplicial, left proper,
combinatorial model category. Again, by $f_k$ we denote the map $S^{k+1} \to
D^{k+2}$ in simplicial sets, and denote $W_k=I_\C \square f_k$, where $I_\C$
denotes the set of generating cofibrations in $\C$ (see
Remark~\ref{change_C_D}). We then form the Bousfield localisation
$P_k\C=L_{W_k}\C$ which we will call the \emph{model structure for $k$-types}
in $\C$.

When $\C$ is a model category that is not necessarily simplicial (but still left proper and combinatorial), we can
still define the model structure for $k$-types in $\C$. In this case we use
the technique of \emph{framings}; see \cite[Section 5]{Hov99} or
\cite[Section 3]{BarRoi11b} for details. As explained in~\cite[Remark 5.2.9]{Hov99}, framings provide any model category
$\C$ with bifunctors \arraycolsep=0.5pt
\begin{eqnarray*}
-\otimes -&\colon & \C \times \sset \longrightarrow \C, \\
(-)^{(-)}&\colon& \sset^{op} \times \C \longrightarrow \C, \\
\map_l(-,-)&\colon& \C^{op} \times \C \longrightarrow \sset, \\
\map_r(-,-)&\colon& \C^{op} \times \C \longrightarrow \sset,
\end{eqnarray*}
and adjunctions
\[
\C(X\otimes K, Y)\cong \sset(K, \map_l(X,Y))\quad\mbox{and}\quad \C^{op}(Y^K, X)\cong \sset(K, \map_r(X,Y)).
\]
The homotopy function complex $\map_\C(-,-)$ agrees with the derived functors
$R\map_l(-,-)$ and $R\map_r(-,-)$. Moreover, if $X$ is a cofibrant object in
$\C$ and $Y$ is a fibrant object in $\C$, then
\[
\xymatrix@C-=0.5cm{
X\otimes -\colon\sset \ar@<3pt>[r] & \ar@<1pt>[l]\C\colon \map_l(X, -)
}\quad\mbox{and}\quad
\xymatrix@C-=0.5cm{
Y^{(-)}\colon\sset \ar@<3pt>[r] & \ar@<1pt>[l]\C^{op}\colon \map_r(-, Y).
}
\]
are Quillen pairs; see~\cite[Corollary 5.4.4]{Hov99}.

Note that a framing does not provide $\C$ with a simplicial model structure
though, as $\map_l$ and $\map_r$ only agree up to a zig-zag of weak
equivalences \cite[Proposition~5.4.7]{Hov99}. However, it does mean that
$\Ho(\C)$ is a closed $\Ho(\sset)$-module category. If $\C$ is already a
simplicial model category, the action from the simplicial structure agrees
with the $\Ho(\sset)$-action coming from framings. In our previous notation,
for a simplicial model category $\C$, the simplicial enrichment
$\Map(-,-)=\Hom_l(-,-)$ coincides with $\map_l(-,-)$ and $\map_r(-,-)$, and
the cotensor is $\Hom_r(-,-)$.

Thus, if our model category $\C$ is not simplicial we can define $W_k=I_\C
\square f_k$ just as before, where the pushout-product is constructed using
the functor $\otimes$ coming from the framing. (Note that if $\C$ is simplicial, the simplicial action on the homotopy category agrees with the action coming from the framing, so the rest of this section, particularly Definition \ref{def:ktypes}, does not depend on any choice of framing.)

\begin{rmk}
If $\C$ is a \emph{pointed} model category, then it is equipped with a
\emph{pointed framing} \cite[Section 5.7]{Hov99}, where the category of
simplicial sets is replaced by \emph{pointed} simplicial sets $\sset_*$.
\end{rmk}

\begin{definition}\label{def:ktypes}
Let $\C$ be a left proper combinatorial model category. We call
$P_k\C=L_{W_k}\C$ the \emph{model category of $k$-types in $\C$}. An object
of $\C$ is called a \emph{$k$-type} (or \emph{$k$-truncated}) if it is $W_k$-local, that is, fibrant in
$P_{k}\C$.
\end{definition}

Before we look further into the properties of this localisation, we need an
analogue of Theorem \ref{thm:localgenerators}(i) using framings. Note that
we are taking the class of maps~$\calS$ in $\sset$ (see
Remark~\ref{change_C_D}).

\begin{proposition}\label{prop:localgeneratorsframings}
Let $\C$ be a combinatorial, left proper model category with generating
cofibrations $I_\C$ and set of cofibrant homotopy generators
$\mathcal{G}_{\C}$. Furthermore, let $\calS$ be a class of maps in $\sset$.
Then the following are equivalent for an object $Z$ of $\C$:
\begin{itemize}
\item[{\rm (i)}] $Z$ is $I_\C\square \calS$-local
\item[{\rm (ii)}] $Z$ is $\mathcal{G}_\C\otimes \calS$-local
\item[{\rm (iii)}] $Z$ is fibrant and $\map_\C(G,Z)$ is $\calS$-local for
    every $G$ in $\mathcal{G}_\C$.
\item[{\rm (iv)}] $Z$ is fibrant and for every $g\colon X\to Y$ in
    $\calS$ the induced map
$$
g^*\colon Z^Y\longrightarrow Z^X
$$
is a weak equivalence in $\C$.
\end{itemize}
\end{proposition}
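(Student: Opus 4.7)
The plan is to mimic the proof of Theorem~\ref{thm:localgenerators}(i), replacing the adjunctions coming from a Quillen bifunctor with the framing adjunctions
$$
\map_\C(X\otimes K, Y)\simeq \map_{\sset}(K,\map_l(X,Y))\simeq \map_\C(X, Y^K),
$$
valid (up to natural weak equivalence) whenever $X$ is cofibrant in $\C$ and $Y$ is fibrant, together with the fact that $\map_l(X,Y)\simeq \map_\C(X,Y)\simeq \map_r(X,Y)$ as homotopy function complexes under the same cofibrancy/fibrancy assumptions (\cite[Proposition~5.4.7]{Hov99}). We may also assume without loss of generality that $S$ consists of cofibrations between cofibrant simplicial sets.

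For (i)$\Leftrightarrow$(ii), observe that $Z$ is $I_\C\square S$-local if and only if for every $i\colon A\to B$ in $I_\C$ and every $g\colon X\to Y$ in $S$, the commutative square obtained by applying $\map_\C(-,Z)$ to the pushout-product $i\square g$ is a homotopy fibre square. By the adjunction above, this square is equivalent to the square
$$
\xymatrix{
\map_{\sset}(Y, Z^B)\ar[r]\ar[d] & \map_{\sset}(Y, Z^A)\ar[d] \\
\map_{\sset}(X, Z^B)\ar[r] & \map_{\sset}(X, Z^A),
}
$$
so the condition is that the pair $(i, Z^Y\to Z^X)$ is homotopy orthogonal. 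Since $\C$ is left proper, Corollary~\ref{cor:equiv_gen} lets us replace $I_\C$ by $\mathcal{G}_\C$: the condition is equivalent to $(\emptyset\to G, Z^Y\to Z^X)$ being homotopy orthogonal for every $G$ in $\mathcal{G}_\C$, which after another adjunction is exactly the statement that $\map_\C(G\otimes Y,Z)\to\map_\C(G\otimes X,Z)$ is a weak equivalence for all such $G$ and $g$, i.e.\ that $Z$ is $\mathcal{G}_\C\otimes S$-local.

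For (ii)$\Leftrightarrow$(iii), fix $G\in\mathcal{G}_\C$ (cofibrant) and $Z$ fibrant. The adjunction gives a natural weak equivalence $\map_\C(G\otimes K, Z)\simeq \map_{\sset}(K,\map_\C(G,Z))$, so $Z$ being $\mathcal{G}_\C\otimes S$-local translates directly into $\map_\C(G,Z)$ being $S$-local in $\sset$ for every $G\in\mathcal{G}_\C$. For (iii)$\Leftrightarrow$(iv), we compare the two sides of the adjunction the other way: by Proposition~\ref{prop:hom_gen_detect}, the map $Z^Y\to Z^X$ is a weak equivalence in $\C$ if and only if $\map_\C(G, Z^Y)\to \map_\C(G, Z^X)$ is a weak equivalence of simplicial sets for every $G\in\mathcal{G}_\C$, and the adjunction rewrites this as $\map_{\sset}(Y,\map_\C(G,Z))\to\map_{\sset}(X,\map_\C(G,Z))$ being a weak equivalence, which is again the $S$-locality of $\map_\C(G,Z)$.

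The only real obstacle is keeping track of when the framing adjunction gives an honest weak equivalence rather than just a natural transformation; this is handled by restricting to cofibrant $G$ and fibrant $Z$ throughout, which is consistent because $Z$ is assumed fibrant in all of (ii)--(iv) and, by the remark following the definition of $L_\calS\C$, $\mathcal{G}_\C\otimes S$-locality is homotopy invariant so only the values of $\map_\C(G,Z)$ on cofibrant $G$ matter. Everything else is a formal cofibrancy-tracking repetition of the Quillen bifunctor argument in Theorem~\ref{thm:localgenerators}(i).
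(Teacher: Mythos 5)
Your proof is correct and takes essentially the same route as the paper, which likewise just reruns the argument of Theorem~\ref{thm:localgenerators} with the bifunctor adjunctions replaced by the framing adjunctions, using exactly the two facts you isolate (the adjunctions $(G\otimes -,\map_l(G,-))$ and $(Z^{(-)},\map_r(-,Z))$ are Quillen pairs for cofibrant $G$ and fibrant $Z$, and $\map_l\simeq\map_r\simeq\map_\C$), together with Corollary~\ref{cor:equiv_gen} and Proposition~\ref{prop:hom_gen_detect}. The only blemish is notational: in your intermediate square the entries should read $\map_\C(B,Z^Y)$ (equivalently $\map_{\sset}(Y,\map_\C(B,Z))$) rather than $\map_{\sset}(Y,Z^B)$, since the cotensor $Z^{(-)}$ is only defined on simplicial sets, not on objects $B$ of $\C$.
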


\begin{proof}
The proof follows exactly the same pattern as Theorem
\ref{thm:localgenerators}(i), so we are not spelling it out here. The
occurring functors $\otimes$, $\Hom_r$ and $\Hom_l$ have been replaced by the
functors $\otimes$, $(-)^{(-)}$, $\map_l$ and $\map_r$ coming from framings.
The only properties needed are that when $X$ is cofibrant and $Y$ is fibrant
in $\C$, the adjunctions $(X\otimes -, \map_l(X,-))$ and $(Y^{(-)}, \map_r(-,
Y))$ are Quillen pairs, and that $\map_l(X,Y)$ is weakly equivalent to
$\map_r(X, Y)$; see \cite[Proposition 5.4.7]{Hov99}. As the homotopy mapping
objects are also derived from framings, these are all compatible and the
necessary adjunctions hold just as before.
\end{proof}

\begin{proposition}\label{prop:framings}
Let $\C$ be a left proper combinatorial model category with set of cofibrant
homotopy generators $\mathcal{G}_\C$. A fibrant object $Z$ of $\C$ is a
$k$-type if and only if $\pi_i (\map_\C(X,Z))=0$ for all $X$ in $\C$, $i
>k$ and every choice of a basepoint, or equivalently, $\pi_i(\map_\C(G,Z))=0$ for all $G$ in $\mathcal{G}_\C$, $i>k$ and every
choice of a basepoint.
\end{proposition}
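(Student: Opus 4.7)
The plan is to combine Proposition~\ref{prop:localgeneratorsframings} with the classical characterisation of $f_k$-local simplicial sets, and then propagate the property from the homotopy generators $\mathcal{G}_\C$ to all objects via the homotopy colimit presentation from Dugger's result.

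First I would apply Proposition~\ref{prop:localgeneratorsframings} to the singleton set $S=\{f_k\}\subset\sset$. By Definition~\ref{def:ktypes}, a fibrant object $Z$ is a $k$-type if and only if it is $W_k$-local, that is, $I_\C\square\{f_k\}$-local. By the equivalence (i)$\Leftrightarrow$(iii) of Proposition~\ref{prop:localgeneratorsframings}, this happens if and only if $\map_\C(G,Z)$ is $f_k$-local in $\sset$ for every $G\in\mathcal{G}_\C$. But by the discussion preceding Proposition~\ref{prop:weakequivalences}, a Kan complex is $f_k$-local precisely when its homotopy groups vanish in degrees $>k$ for every choice of basepoint. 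Since $\map_\C(G,Z)$ is automatically a Kan complex (being a homotopy function complex), this reduces $f_k$-locality of $\map_\C(G,Z)$ to the vanishing condition $\pi_i(\map_\C(G,Z))=0$ for $i>k$. This proves the equivalence between being a $k$-type and the generator version.

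Next, I would show that the generator version is equivalent to the statement with all $X\in\C$. The direction ``all $X$'' implies ``all $G\in\mathcal{G}_\C$'' is trivial. For the converse, use that every object $X$ of $\C$ is weakly equivalent to a filtered homotopy colimit $\hocolim G_\alpha$ of objects of $\mathcal{G}_\C$. Then, exactly as in the proof of Proposition~\ref{prop:generatorlift}, the homotopy function complex satisfies
\[
\map_\C(X,Z)\simeq\map_\C(\hocolim G_\alpha,Z)\simeq\holim\map_\C(G_\alpha,Z),
\]
by \cite[Theorem 19.4.2(2)]{Hir03}. Since each $\map_\C(G_\alpha,Z)$ has vanishing homotopy groups above degree $k$, i.e., is a $k$-type in $\sset$, and $k$-types in simplicial sets are closed under homotopy limits (this is standard, since $f_k$-local objects are closed under homotopy limits in any left Bousfield localisation), we conclude that $\map_\C(X,Z)$ is also a $k$-type, so $\pi_i(\map_\C(X,Z))=0$ for $i>k$.

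The main (very mild) obstacle is just making sure the framing-based constructions behave well enough: one needs the derived adjunctions and the compatibility of $\map_\C$ with homotopy colimits in the first variable, both of which are already granted by the framework set up in Section~\ref{sect:k-types_model} and by Proposition~\ref{prop:localgeneratorsframings}. Once these are in hand, the argument is essentially formal.
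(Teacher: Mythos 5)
Your argument is correct and follows essentially the same route as the paper: reduce via Proposition~\ref{prop:localgeneratorsframings} to $\map_\C(G,Z)$ being $f_k$-local for the generators, then pass to arbitrary $X$ using the filtered homotopy colimit presentation and the compatibility $\map_\C(\hocolim G_\alpha,Z)\simeq\holim\map_\C(G_\alpha,Z)$. The only difference is that you make explicit the closure of $k$-types in $\sset$ under homotopy limits, which the paper's proof leaves implicit.
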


\begin{proof}
By Proposition~\ref{prop:localgeneratorsframings} we have that $Z$ is
$W_k$-local if and only if $Z$ is fibrant in~$\C$ and $\map_\C(G, Z)$ is a
$k$-type in $\sset$ for every $G$ in $\mathcal{G}_\C$. Since every object
in~$\C$ is weakly equivalent to a homotopy colimit of objects of
$\mathcal{G}_\C$ and those commute with homotopy function complexes, the
result follows.
\end{proof}

In combination with Proposition \ref{prop:localgeneratorsframings} we also
have the following.
\begin{corollary}
Let $\C$ be a left proper combinatorial model category with set of cofibrant
homotopy generators $\mathcal{G}_\C$, and let $f_k\colon S^{k+1}\to D^{k+2}$
in $\sset$. Then the model category of $k$-types $P_k\C$ coincides with
$L_{\mathcal{G}_\C \otimes f_k}\C$. \qed
\end{corollary}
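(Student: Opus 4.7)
The plan is to reduce the statement to a direct application of Proposition~\ref{prop:localgeneratorsframings}, taking $S=\{f_k\}$. Two left Bousfield localisations of the same underlying model category $\C$ at sets of maps between cofibrant objects agree as soon as they have the same class of local (equivalently, fibrant) objects, because the cofibrations coincide with those of~$\C$ and the weak equivalences are then determined by the local objects.

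First I would recall that $P_k\C = L_{W_k}\C$ with $W_k = I_\C \square f_k$, so its fibrant objects are by definition the $I_\C\square f_k$-local objects of~$\C$. By Proposition~\ref{prop:localgeneratorsframings} applied to the singleton class $S=\{f_k\}\subset\sset$, being $I_\C\square f_k$-local is equivalent to being $\mathcal{G}_\C\otimes f_k$-local. Hence the $k$-types in~$\C$ coincide with the fibrant objects of $L_{\mathcal{G}_\C\otimes f_k}\C$.

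To conclude, I would observe that both $L_{W_k}\C$ and $L_{\mathcal{G}_\C\otimes f_k}\C$ are left Bousfield localisations of~$\C$ (both exist because $\C$ is left proper combinatorial and each localising set is a set), so the cofibrations of both localised structures agree with those of~$\C$. The weak equivalences in a left Bousfield localisation are characterised as the maps $g$ such that $g^*\colon \map_\C(Y,Z)\to \map_\C(X,Z)$ is a weak equivalence for every local object~$Z$; since the two localisations have the same local objects by the previous step, they have the same weak equivalences, and therefore the same fibrations. Thus $P_k\C = L_{\mathcal{G}_\C\otimes f_k}\C$ as model structures.

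There is no real obstacle: the content is essentially bookkeeping, with the key input being the equivalence (i)$\Leftrightarrow$(ii) of Proposition~\ref{prop:localgeneratorsframings}, whose proof in turn rests on the framing analogue of the bifunctor/adjunction calculus used in Theorem~\ref{thm:localgenerators}. The only mild subtlety is noting that the homotopy generators $\mathcal{G}_\C$ are cofibrant, so that $\mathcal{G}_\C\otimes f_k$ is a set of cofibrations between cofibrant objects of~$\C$ and the localisation is well-defined.
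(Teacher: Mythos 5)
Your proposal is correct and follows exactly the route the paper intends: the corollary is stated with no written proof precisely because it is the immediate combination of Proposition~\ref{prop:localgeneratorsframings} (with $S=\{f_k\}$), identifying the $I_\C\square f_k$-local and $\mathcal{G}_\C\otimes f_k$-local objects, with the standard fact that two left Bousfield localisations of $\C$ with the same cofibrations and the same local objects coincide. Your additional remarks on existence and cofibrancy of the localising sets are fine but not essential.
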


\begin{rmk}
When $\C$ is a simplicial model category, then the model structure~$P_k\C$ agrees
with the model structure for $k$-types defined by Barwick in
\cite[Proposition~5.28]{Bar10}.
\end{rmk}

In the context of \emph{familiarisation} as defined by \cite{BR}, one would
define $P_{k}\C$ to be $L_{I_\C \square J_{f_k}}\C$ where $J_{f_k}$ denotes
the generating acyclic cofibrations of $L_{f_k}\sset$. However, those two
model structures agree since $L_{f_k}\sset=L_{J_{f_k}}\sset$ by
Proposition~\ref{prop:lurie}. The reason one works with the acyclic
cofibrations in \cite{BR} is to actually cut down the localised weak
equivalences of some $L_\calS\sset$ to a generating set if $\calS$ is not a
set. However, in our case we only localise simplicial sets at one morphism,
making this technicality unnecessary.

\begin{proposition}
Let $\C$ be a left proper combinatorial model category. The model category of
$k$-types $P_{k}\C$ has the following properties:
\begin{itemize}
\item[{\rm (i)}] Every Quillen adjunction $\sset \rightleftarrows \C$
    gives rise to a Quillen adjunction $L_{f_k}\sset \rightleftarrows
    P_{k}\C,$ and ${P_k}\C$ is the closest model structure to $\C$ with
    this property. This means that if $\C \rightleftarrows \D$ is a
    Quillen adjunction such that the composite $\sset \rightleftarrows
    \D$ factors over $L_{f_k}\sset$, then $P_k\C \rightleftarrows \D$ is
    also a Quillen adjunction.

\item[{\rm (ii)}] If $\C$ is a simplicial model category, then $P_{k}\C$
    is a $L_{f_k}\sset$-model category.

\item[{\rm (iii)}] For every $k\ge 0$ the model structures $P_{k}
    P_{{k+1}} \C$ and  $P_{k}\C$ coincide.
\end{itemize}
\end{proposition}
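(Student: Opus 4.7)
Throughout, write $W_k = I_\C \square f_k$, so that $P_k\C = L_{W_k}\C$. I address the three parts in order.

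For (i), let $F : \sset \rightleftarrows \C : G$ be a Quillen adjunction. By the universal property of left Bousfield localisation, descent to $L_{f_k}\sset \rightleftarrows P_k\C$ follows once $F(f_k)$ is shown to be a $W_k$-local equivalence. Via the cosimplicial framing $F(\Delta^\bullet)$ of $\C$, the map $F(f_k)$ is weakly equivalent to $A \otimes f_k$ where $A = F(\Delta^0)$ is cofibrant. Since $\emptyset \to A$ is a retract of a transfinite composition of pushouts of maps in $I_\C$, and pushout-product commutes with these operations, $A \otimes f_k = (\emptyset \to A) \square f_k$ is a retract of a transfinite composition of pushouts of maps in $W_k$, and therefore a trivial cofibration in $P_k\C$. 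For the universal (``closest'') assertion, given $L : \C \rightleftarrows \D : R$ whose postcomposition with $F$ descends to $L_{f_k}\sset \rightleftarrows \D$, apply the same reasoning to $L \circ F$; the framing identification $L(X \otimes K) \simeq L(X) \otimes K$ for cofibrant $X$ shows that $L$ sends each generator $i \square f_k \in W_k$ to a weak equivalence in $\D$, so $L$ factors through $P_k\C$ by the universal property of $L_{W_k}\C$.

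For (ii), we must show $\otimes : P_k\C \times L_{f_k}\sset \to P_k\C$ is a Quillen bifunctor. Cofibrations are unchanged, so pushout-products of cofibrations remain cofibrations. Proposition~\ref{prop:Quillenbif} applied to $\otimes : \sset \times \C \to \C$ with $\mathcal{S} = \{f_k\}$ shows that trivial cofibrations in $L_{f_k}\sset$ pushout-producted with cofibrations in $\C = P_k\C$ yield trivial cofibrations in $P_k\C$. The remaining condition --- that a trivial cofibration $f$ in $P_k\C$ pushout-producted with a cofibration $g$ in $L_{f_k}\sset$ is a trivial cofibration in $P_k\C$ --- follows, via the adjoint formulation in Lemma~\ref{lemma2}, from the claim that for $Z$ fibrant in $P_k\C$ and $K$ cofibrant in $\sset$, the cotensor $Z^K$ is again fibrant in $P_k\C$. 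By Proposition~\ref{prop:framings} this reduces to showing $\map_\C(G, Z^K) \simeq \Map(K, \map_\C(G, Z))$ is a $k$-type in $\sset$, which is the standard fact that mapping spaces into a $k$-type are $k$-types.

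For (iii), the model structures $P_k P_{k+1}\C$ and $P_k\C$ share the underlying category and the cofibrations of $\C$, so they coincide once their fibrant (equivalently, local) objects agree. By Proposition~\ref{prop:framings}, a fibrant $Z$ is $W_m$-local if and only if $\pi_i(\map_\C(G, Z)) = 0$ for all $G \in \mathcal{G}_\C$ and $i > m$. Since vanishing above $k$ forces vanishing above $k+1$, every $W_k$-local object is $W_{k+1}$-local. Hence the maps of $W_{k+1}$ are $W_k$-local equivalences, and iterating Bousfield localisations yields $P_k P_{k+1}\C = L_{W_k \cup W_{k+1}}\C = L_{W_k}\C = P_k\C$.

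The principal technical obstacle lies in part (ii), where one must be careful about which pushout-product conditions are automatic from Proposition~\ref{prop:Quillenbif} and which require the additional argument via mapping spaces into $k$-types. Part (i)'s ``closest'' direction also requires some care concerning the interaction of a left Quillen functor with framings.
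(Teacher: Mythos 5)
Your parts (ii) and (iii) are essentially sound: (iii) is the paper's own argument (same cofibrations, same fibrant objects, detected via Proposition~\ref{prop:framings}, which is framing-independent), and (ii) is actually more careful than the paper's one-line citation of Proposition~\ref{prop:Quillenbif} --- though the reduction you attribute to Lemma~\ref{lemma2} really needs Proposition~\ref{prop:testobject} together with the pullback argument using \cite[Proposition 3.3.16]{Hir03} (i.e.\ it is the paper's criterion that familiarity is equivalent to $\Hom_r(X,Y)$ being local for $X$ cofibrant, $Y$ fibrant); that machinery is available in the paper, so this is a citation slip rather than a gap.

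Part (i), however, has a genuine gap, and it is exactly at the point where the proposition goes beyond simplicial $\C$. Your cell-induction step --- ``$\emptyset \to A$ is a retract of a transfinite composition of pushouts of maps in $I_\C$, and pushout-product commutes with these operations, so $(\emptyset\to A)\square f_k$ is a retract of a relative $W_k$-cell complex'' --- presupposes that $-\otimes f_k$ is cocontinuous in the $\C$-variable. For a simplicial (or otherwise genuinely tensored) $\C$ this is the standard argument, but for the framing bifunctor of Section~\ref{sect:k-types_model} it fails: $X\otimes K$ is defined via a functorial cosimplicial frame $X^\bullet$ produced by a functorial factorisation, and $X\mapsto X^\bullet$ does not preserve pushouts or transfinite compositions, so a cell decomposition of $A$ does not induce a $W_k$-cell decomposition of $A\otimes f_k$ (nor is the frame $F(\Delta^\bullet)$ you start from the one used to form $W_k$). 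The paper avoids this entirely by arguing homotopy-invariantly: by Proposition~\ref{prop:localgeneratorsframings} (and Proposition~\ref{prop:framings}), if $Z$ is $W_k$-local then $\map_\C(A,Z)$ is a $k$-type for \emph{every} $A$, whence $A\otimes f_k$ is a $W_k$-local equivalence for every cofibrant $A$, with no cellular bookkeeping. The same issue infects your ``closest'' argument: from ``$L(X\otimes K)\simeq L(X)\otimes K$ for cofibrant $X$'' it does not follow that $L(i\square f_k)$ is a weak equivalence for $i\in I_\C$ --- the domains of $I_\C$ need not be cofibrant, and even when they are you still need a cobase-change plus two-out-of-three argument to pass from $L(A)\otimes f_k$ and $L(B)\otimes f_k$ being equivalences to the corner map being one. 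The efficient route, which is the paper's, is to use that $P_k\C=L_{\mathcal{G}_\C\otimes f_k}\C$ (the corollary to Proposition~\ref{prop:framings}): the hypothesis applied to the left Quillen functors $G\otimes -$ for $G\in\mathcal{G}_\C$ gives that $L(G\otimes f_k)$ is a weak equivalence, and then \cite[Proposition 3.3.18]{Hir03} yields the factorisation through $P_k\C$ directly.
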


\begin{proof}

Let $F\colon\sset \rightleftarrows \C\colon U$ be a Quillen adjunction.
By~\cite[Proposition 3.3.18]{Hir03}, in order for this to be a Quillen
adjunction between $L_{f_k}\sset$ and $P_k\C$, we need to show that $F(f_k)$
is a weak equivalence in $P_k\C$.

By \cite[Chapter 5]{Hov99}, all Quillen
adjunctions such that the left adjoint is defined on the Quillen model structure on simplicial sets arise from framings, 
that is, for every left Quillen functor $F$ there is an object $A \in \C$ such that the left derived functors of $F$ and $A \otimes -$ agree. (Every adjunction between $\sset$ and $\C$ is of the
form $(A^\bullet \otimes -, \Hom(A^\bullet,-))$ for some cosimplicial object
$A^\bullet \in \C^\Delta$, and every Quillen adjunction is given by a framing
on $A^\bullet[0]=A$; see \cite[Proposition 3.1.5 and Section 5.2]{Hov99} and
\cite[Section~3]{BarRoi11b}.) So we have to show that $A \otimes f_k$ is a
weak equivalence in $P_k\C$. By
Proposition~\ref{prop:localgeneratorsframings}, all maps of the form $G
\otimes f_k$ are weak equivalences for all homotopy generators $G \in \mathcal{G}$.
But as every $A$ is a filtered homotopy colimit of such generators, and $- \otimes
f_k$ commutes with such homotopy colimits, $A \otimes f_k$ is a weak equivalence as well.

Now let $F'\colon \C \rightleftarrows \D\colon U'$ be another Quillen
adjunction such that $F'(F(f_k))$ is a weak equivalence in $\D$ for any left
Quillen functor $F$ as before. This means that $F'(A \otimes f_k)$ is a weak
equivalence in $\D$ for any $A \in \C$. So  in particular, $F'$ sends all
morphisms $G \otimes f_k$ to weak equivalences, where $G \in \G$. As
$P_k\C=L_{\G \otimes f_k}\C$, this means that $F'$ sends all the weak
equivalences in $P_k\C$ to weak equivalences in $\D$, which is what we wanted
to prove.

Part (ii) follows from Proposition~\ref{prop:Quillenbif}(ii), and part (iii)
follows from the fact that both model structures have the same cofibrations
and the same fibrant objects. This last point can be easily checked using the
characterisation of local objects given in
Proposition~\ref{prop:localgeneratorsframings}.
\end{proof}

Before we move on to the next result, let us note the following. The fact
that a model category is $\lambda$-presentable only depends on the underlying
category, not on its model structure. Also, the left Bousfield localisation
of a cofibrantly generated model category is again cofibrantly generated.
Thus, if a model category is combinatorial, so is any left Bousfield
localisation of it. Also, as Bousfield localisation does not change
cofibrations and preserves weak equivalences, if $\mathcal{G}_\C$ is a set of
homotopy generators for a combinatorial model category $\C$, then
$\mathcal{G}_\C$ will also be a set of homotopy generators for any left
Bousfield localisation of $\C$.

We can now characterise the weak equivalences of ${P_k}\C$.

\begin{proposition}
Let $\C$ be a left proper combinatorial model category and let $f\colon X \to Y$ be a morphism in $\C$. If its fibrant approximation
$\hat{f}\colon \hat{X} \to \hat{Y}$ in ${P_k}\C$ induces a
weak equivalence
$$
\hat{f}_*\colon \map_\C(G,\hat{X}) \longrightarrow \map_\C(G,\hat{Y})
$$
in $L_{f_k}\sset$ for all homotopy generators $G$ in $\mathcal{G}_\C$, then
the morphism $f$ is a weak equivalence in~${P_k}\C$.
\end{proposition}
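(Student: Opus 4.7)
The plan is to reduce the problem to detection of weak equivalences by homotopy generators in the underlying model category $\C$, applied to the fibrant replacement $\widetilde{f}$. The key observation is that the fibrancy of $\widetilde{X}$ and $\widetilde{Y}$ in $P_k\C$ forces the mapping simplicial sets $\map_\C(G,\widetilde{X})$ and $\map_\C(G,\widetilde{Y})$ to be $f_k$-local, so that an $f_k$-equivalence between them is automatically a genuine weak equivalence of simplicial sets.

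More precisely, I would proceed as follows. First, since $\widetilde{X}$ and $\widetilde{Y}$ are fibrant in $P_k\C$, that is, $W_k$-local, Proposition~\ref{prop:framings} shows that $\map_\C(G,\widetilde{X})$ and $\map_\C(G,\widetilde{Y})$ are $k$-types in $\sset$ for every $G$ in $\mathcal{G}_\C$, i.e., fibrant objects in $L_{f_k}\sset$. By hypothesis, the induced map $\widetilde{f}_*\colon \map_\C(G,\widetilde{X})\to\map_\C(G,\widetilde{Y})$ is a weak equivalence in $L_{f_k}\sset$, so it is an $f_k$-local equivalence between $f_k$-local objects and therefore an honest weak equivalence in $\sset$.

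Next, since this holds for every $G\in\mathcal{G}_\C$, Proposition~\ref{prop:hom_gen_detect} implies that $\widetilde{f}\colon\widetilde{X}\to\widetilde{Y}$ is a weak equivalence in $\C$, and therefore also in $P_k\C$. Finally, the maps $X\to\widetilde{X}$ and $Y\to\widetilde{Y}$ exhibiting the fibrant replacement are (trivial) cofibrations in $P_k\C$ and in particular weak equivalences there, so the two-out-of-three axiom applied to the commutative square of the fibrant replacement forces $f\colon X\to Y$ to be a weak equivalence in $P_k\C$.

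The argument is essentially a bookkeeping exercise; there is no substantial obstacle. The only points requiring a little care are making sure that the mapping objects $\map_\C(-,-)$ coming from framings actually serve as homotopy function complexes (so that Propositions \ref{prop:hom_gen_detect} and \ref{prop:framings} apply), and that Proposition~\ref{prop:framings} is available in the combinatorial (not necessarily simplicial) setting via the framing construction from Section~\ref{sect:k-types_model}. Both facts have been set up earlier in the paper, so invoking them completes the proof.
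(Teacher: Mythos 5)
Your proof is correct and is essentially the paper's argument: the paper simply cites its general Proposition~\ref{prop:char_S-equiv}, whose proof is exactly your chain of steps (mapping spaces into the $W_k$-local objects $\widetilde{X},\widetilde{Y}$ are $f_k$-local by Proposition~\ref{prop:framings}, an $f_k$-equivalence between $f_k$-local objects is a genuine weak equivalence, cofibrant homotopy generators detect weak equivalences, then two-out-of-three), together with the remark about framings in the non-simplicial case. Your only deviation is to invoke Proposition~\ref{prop:generatorlift} directly on the simplicial mapping spaces rather than going through the containment $\mathcal{G}_\C\subset\mathcal{G}_\C\otimes\mathcal{G}_{\sset}$ used in the paper, which is a harmless shortcut.
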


\begin{proof}
We have that $\mathcal{G}_\C \subset \mathcal{G}_\C \otimes
\mathcal{G}_{\sset}$ as we can, without loss of generality, add the single
point to $\mathcal{G}_{\sset}$. Thus, the statement follows from
Proposition~\ref{prop:char_S-equiv}. Note that if $\C$ is not simplicial,
then we have to replace the mapping objects in that proof by the mapping
objects given by framings.
\end{proof}

\begin{corollary}
Let $\C$ be a left proper combinatorial model category and let $f\colon X \to Y$ be a morphism in $\C$. If its fibrant approximation
$\hat{f}\colon \hat{X} \to \hat{Y}$ in ${P_k}\C$ induces an
isomorphism of homotopy groups
$$
\pi_i(\hat{f}_*)\colon \pi_i(\map_\C(G,\hat X)) \longrightarrow
\pi_i(\map_\C(G,\hat Y))
$$
with respect to all basepoints for all $i\le k$ and homotopy generators $G$ in $\mathcal{G}_\C$, then $f$ is
a weak equivalence in~${P_k}\C$.  \qed
\end{corollary}

\subsection{Example: $\calS$-local simplicial sets}\label{sec:localsset}

Let us consider the example of left Bousfield localisations of pointed
simplicial sets, $\C=L_\calS\sset_*$. We can easily describe Postnikov
sections in this model category. By definition, $P_k L_\calS \sset_*=L_{W_k}
L_\calS\sset_*$ where $W_k= I_{L_\calS\sset_*} \square f_k$ and $f_k: S^{k+1}
\to D^{k+2}$. As the generating cofibrations $I_{L_\calS\sset_*}$ of
$L_\calS\sset_*$ are the same as the generating cofibrations of $\sset_*$ we have that
$I_{L_\calS\sset_*}\square f_k=I_{\sset_*}\square f_k$. Then we can conclude that $$P_k L_\calS\sset_*=L_{f_k}L_\calS\sset_*.$$ Thus, $X$ is
fibrant in $P_kL_\calS\sset_*$ if and only if it is a Kan complex,
$\calS$-local and $\pi_iX=\pi_iL_\calS X=0$ for $i > k$.

\subsection{Example: $k$-types in chain complexes} Let $\Ch_b(R)$ denote the category of \emph{non-negatively graded}
chain complexes of $R$-modules, where $R$ is a commutative ring with unit.
We are going to apply the results from the previous section this category. This is a particularly interesting example as it
concerns a model category that is not simplicial, although it is left proper and combinatorial. We are going to describe
the $k$-types in $\Ch_b(R)$ as well as describe some of the weak
equivalences. The results are just what one would expect and fit very neatly
with our general setup.

Consider the standard projective model structure on $\Ch_b(R)$; see~\cite[Section 7]{DS}. The
weak equivalences are given by quasi-isomorphisms, fibrations are morphisms
which are surjective in positive degrees, and cofibrations are monomorphisms
with projective cokernel in every degree. Consider the model category of
$k$-types of chain complexes, $P_k\Ch_b(R)$. According to Definition
\ref{def:ktypes}, this is the left Bousfield localisation with respect to the
set
$$
W_k=I_{\Ch_b(R)}\square \{ f_k\colon S^{k+1} \longrightarrow D^{k+2} \}.
$$
Now the generating cofibrations in the standard projective model structure
are given by the inclusions
$$
I_{\Ch_b(R)}= \{ \mathbb{S}^{n} \longrightarrow \mathbb{D}^{n+1} \mid n \ge 0\},
$$
where $\mathbb{S}^{n}$ denotes the chain complex which is $R$ in degree
$n$ and zero everywhere else, and $\mathbb{D}^{n+1}$ denotes the chain complex
with $R$ in degrees $n$ and $n+1$ with the identity differential between
them, and zero everywhere else. To avoid notational confusion with the sphere
and disk in spaces, we will use bold face for these.

Recall that the suspension functor $\Sigma$ in a pointed model category $\C$
can be defined using pointed framings; see \cite[Definition 6.1.1]{Hov99}. If
$X$ is a cofibrant object then $\Sigma X=X\otimes S^1$, that is, $\Sigma X$
is the pushout of the diagram
$$
\xymatrix{
X\otimes\partial\Delta[1] \ar[r]\ar[d] & X\otimes \Delta[1] \\
\ast &
}
$$
So let us look at the pushout-product 
\[
\xymatrix{ \mathbb{S}^n \otimes S^{k+1} \ar[r]\ar[d] & \mathbb{S}^n \otimes D^{k+2} \ar[d] \ar@/^/[ddr] & \\
\mathbb{D}^{n+1} \otimes S^{k+1} \ar@/_/[drr] \ar[r] & P \ar[dr] & \\
& & \mathbb{D}^{n+1} \otimes D^{k+2}.
}
\]

In the category $\Ch_b(R)$, the suspension is given by shifting. By \cite[Section 6.1]{Hov99}, $\mathbb{S}^i \otimes S^j = \mathbb{S}^{i+j}$. Furthermore, framings are compatible with fibre and cofibre sequences \cite[Section 6.2]{Hov99}, so the above diagram is the same as

\[
\xymatrix{ \mathbb{S}^{n+k+1} \ar[r]\ar[d] & \mathbb{D}^{n+k+2}  \ar[d] \ar@/^/[ddr] & \\
\mathbb{D}^{n+k+2}  \ar@/_/[drr] \ar[r] & P \ar[dr] & \\
& & \mathbb{D}^{n+k+3}.
}
\]
The pushout of two disks and a sphere is just another sphere,
hence we obtain
\[
W_k=\{ \mathbb{S}^{n+k+1} \longrightarrow \mathbb{D}^{n+k+2} \mid n \ge 0\},
\]
so $P_k\Ch_b(R)$ is just localising $\Ch_b(R)$ at the map $g_k\colon
\mathbb{S}^{k+1} \to \mathbb{D}^{k+2}$. Note that local equivalences are
closed under (positive) suspensions, and hence localising with respect to
$g_k$ is the same as localising with respect to $\{\Sigma^n g_k \mid n\ge
0\}=W_k$.

Recall that we denote by $\map_{\Ch_b(R)}(-,-)$ a homotopy function complex for the model category $\Ch_b(R)$.
\begin{proposition}\label{prop:ktypechaincx}
A fibrant chain complex $M$ in $\Ch_b(R)$ is a $k$-type if and only if
$H_i(M)=0$ for all $i > k$.
\end{proposition}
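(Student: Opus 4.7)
The plan is to exploit the simplification already made in the excerpt: $P_k\Ch_b(R) = L_{g_k}\Ch_b(R)$ where $g_k\colon \mathbb{S}^{k+1}\to \mathbb{D}^{k+2}$. Every object in $\Ch_b(R)$ is fibrant (since a map to $0$ is automatically surjective in positive degrees), so the condition of being a $k$-type for a fibrant $M$ reduces to the requirement that
\[
g_k^*\colon \map_{\Ch_b(R)}(\mathbb{D}^{k+2}, M) \longrightarrow \map_{\Ch_b(R)}(\mathbb{S}^{k+1}, M)
\]
is a weak equivalence of simplicial sets.

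First I would dispose of the target. The chain complex $\mathbb{D}^{k+2}$ is the cone on the identity of $\mathbb{S}^{k+1}$; it is cofibrant (concentrated in projective degrees) and acyclic, so it is weakly equivalent to $0$ in $\Ch_b(R)$. Compatibility of the homotopy function complex with the trivial map $0\to\mathbb{D}^{k+2}$ (see \cite[Theorem~17.7.7]{Hir03}) therefore gives $\map_{\Ch_b(R)}(\mathbb{D}^{k+2},M)\simeq *$. Hence being $g_k$-local is equivalent to $\map_{\Ch_b(R)}(\mathbb{S}^{k+1},M)$ being weakly contractible.

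Next I would compute the homotopy groups of the source. Using the framing formalism recalled in Section~\ref{sect:k-types_model}, for $\mathbb{S}^{k+1}$ cofibrant and $M$ fibrant one has
\[
\pi_i\,\map_{\Ch_b(R)}(\mathbb{S}^{k+1},M)\;\cong\;[\Sigma^i \mathbb{S}^{k+1},M]_{\Ho(\Ch_b(R))}
\]
for $i\ge 0$. In $\Ch_b(R)$ the suspension is the shift, so $\Sigma^i \mathbb{S}^{k+1}=\mathbb{S}^{k+1+i}$, and the standard identification $[\mathbb{S}^n,M]\cong H_n(M)$ yields $\pi_i\,\map_{\Ch_b(R)}(\mathbb{S}^{k+1},M)\cong H_{k+1+i}(M)$. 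Thus the mapping space is weakly contractible if and only if $H_j(M)=0$ for every $j\ge k+1$, which gives both directions of the equivalence.

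The only point requiring care is the last identification $\pi_i\,\map(\mathbb{S}^{k+1},M)\cong H_{k+1+i}(M)$; this is a direct unwinding of the framing definition together with the well-known description of $\Ho(\Ch_b(R))$-morphisms out of spheres, but it is where one must be attentive to basepoints in the simplicial set (the zero map is a canonical basepoint and the components are detected by $H_{k+1}(M)=\pi_0$). An alternative route would be to apply Proposition~\ref{prop:framings} with a set of cofibrant homotopy generators containing the $\mathbb{S}^n$, reducing the statement to the computation $\pi_i\,\map(\mathbb{S}^n,M)\cong H_{n+i}(M)$ for every $n$; the direct argument above is however shorter and avoids identifying $\mathcal{G}_{\Ch_b(R)}$ explicitly.
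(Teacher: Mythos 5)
Your argument is correct and follows essentially the same route as the paper: reduce $g_k$-locality to a statement about $\map_{\Ch_b(R)}(\mathbb{S}^{k+1},M)$ using the acyclicity of $\mathbb{D}^{k+2}$, then identify $\pi_i$ of the mapping space with $[\mathbb{S}^{i+k+1},M]\cong H_{i+k+1}(M)$ via adjunction. The only cosmetic difference is that you contract the target mapping space first, whereas the paper compares both sides of the induced map on homotopy groups directly.
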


\begin{proof}
The chain complex $M$ is $g_k$-local if and only if
\[
\pi_i(\map_{\Ch_b(R)}(\mathbb{D}^{k+2},M)) \longrightarrow \pi_i(\map_{\Ch_b(R)}(\mathbb{S}^{k+1},M))
\]
is an isomorphism for all $i\ge 0$. By adjunction, this is equivalent to
\[
[ \mathbb{D}^{i+k+2}, M] \longrightarrow [ \mathbb{S}^{i+k+1}, M]
\]
being an isomorphism for all $i\ge 0$, where the square brackets denote
morphisms in the derived category $D_b(R)$. But as the chain complex
$\mathbb{D}^{i+k+2}$ is acyclic and the right hand side equals the homology
$H_{i+k+1}(M)$ of $M$, the above is equivalent to $H_i(M)=0$ for all $i >k$.
\end{proof}

We can now say something about the weak equivalences in $P_k\Ch_b(R)$. Recall
that if $M$ is a chain complex in $\Ch_b(R)$, we denote by $M[n]$ the
$n$-fold suspension of $M$.

\begin{proposition}\label{prop:chaincxequivalence}
Let $f\colon M \to N$ be a morphism of chain complexes such that $H_i(f)$ is
an isomorphism for $0\le i\le k+1$. Then $f$ is a weak equivalence in
$P_k\Ch_b(R)$.
\end{proposition}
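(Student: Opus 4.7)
My plan is to verify the defining condition for $g_k$-local equivalences directly. By Proposition~\ref{prop:ktypechaincx}, a $g_k$-local object is a fibrant chain complex $Z$ with $H_i(Z)=0$ for $i>k$, so $f$ is a weak equivalence in $P_k\Ch_b(R)$ exactly when, for every such $Z$, the induced map
\[
f^{*}\colon \map_{\Ch_b(R)}(N,Z) \longrightarrow \map_{\Ch_b(R)}(M,Z)
\]
is a weak equivalence of simplicial sets. Let $C$ be the mapping cone of $f$. The cofibre sequence $M\to N\to C$ induces a fibre sequence of mapping spaces, so this amounts to requiring $\map_{\Ch_b(R)}(C,Z)$ to be contractible, equivalently $[C[i],Z]_{D_b(R)}=0$ for every $i\ge 0$.

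Combining the hypothesis that $H_i(f)$ is an isomorphism for $0\le i\le k$ with the long exact homology sequence of the cofibre triangle (whose connecting map $H_{i}(M)\to H_i(N)$ is induced by $f$) shows that $H_i(C)=0$ for $0\le i\le k$; the case $i=0$ uses that the complexes are concentrated in non-negative degrees, so the sequence terminates. Consequently, for every $i\ge 0$ the shift $C[i]$ satisfies $H_j(C[i])=H_{j-i}(C)=0$ for all $j\le k$.

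The remaining ingredient is the following orthogonality statement: if $A$ is a bounded-below chain complex with $H_j(A)=0$ for $j\le k$ and $B$ has $H_j(B)=0$ for $j>k$, then $[A,B]_{D_b(R)}=0$. This is the vanishing $\Hom_{D(R)}(D^{>k},D^{\le k})=0$ for the standard $t$-structure on $D(R)$. A direct argument replaces $B$ by its good truncation $\tau_{\le k}^{\rm good}B$ supported in degrees $\le k$, and constructs a projective resolution $P\to A$ with $P_j=0$ for $j\le k$ by an inductive Hurewicz-style procedure starting from a projective cover of $H_{k+1}(A)$; any chain map between complexes supported in disjoint degree ranges then vanishes termwise, and so does every homotopy class. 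Applying the lemma with $A=C[i]$ and $B=Z$ yields $[C[i],Z]_{D_b(R)}=0$ for all $i\ge 0$, completing the proof.

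The main technical obstacle is the construction of the projective resolution of $A$ strictly in degrees $>k$; this is enabled by the vanishing $H_j(A)=0$ for $j\le k$, which permits the usual inductive lifting argument to be started at degree $k+1$, but it can equivalently be bypassed by invoking the standard $t$-structure vanishing on $D(R)$.
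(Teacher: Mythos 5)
Your overall strategy (test against the $g_k$-local objects identified in Proposition~\ref{prop:ktypechaincx} and compute in the derived category) is reasonable and different from the paper, which instead assumes $f$ is a cofibration and constructs an explicit lift against $Z\to 0$ by a Dwyer--Spalinski-style degreewise argument. However, your key reduction step has a genuine gap: in the connective category $\Ch_b(R)$ it is \emph{not} true that $f^*\colon\map_{\Ch_b(R)}(N,Z)\to\map_{\Ch_b(R)}(M,Z)$ being a weak equivalence for all local $Z$ ``amounts to'' $\map_{\Ch_b(R)}(C,Z)$ being contractible. Contractibility of the fibre gives isomorphisms on $\pi_i$ for $i\ge 1$ (and even this needs the simplicial-abelian-group structure of the mapping spaces to handle basepoints) and \emph{injectivity} on $\pi_0$, but not surjectivity on $\pi_0$: the relevant exact sequence is $[C,Z]\to[N,Z]\to[M,Z]\to[C[-1],Z]$, and $C[-1]$ only has homology in degrees $\ge k$ (since $H_{k+1}(C)=\operatorname{coker}H_{k+1}(f)$ need not vanish), so your orthogonality lemma does not kill the last term. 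The failure is not merely hypothetical: for $f\colon R[k]\to 0$ the cone is $R[k+1]$, which satisfies $[C[i],Z]=0$ for all $i\ge 0$ and all local $Z$ by your lemma, yet $f$ is not a weak equivalence in $P_k\Ch_b(R)$ because $[0,Z]\to[R[k],Z]$ is not surjective for $Z=R[k]$. So ``cone locally trivial $\Rightarrow$ local equivalence'' is false here, and the surjectivity on $\pi_0$ must be proved using the hypothesis on $f$ itself, not only through the cone.

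The good news is that your derived-category route is easily repaired, and the repair uses exactly the orthogonality lemma you already proved. For any bounded-below $X$ the triangle $\tau_{\ge k+1}X\to X\to\tau_{\le k}X$ together with $[\tau_{\ge k+1}X[\epsilon],Z]=0$ ($\epsilon=0,1$) gives $[X,Z]\cong[\tau_{\le k}X,Z]$ for every local $Z$; since $H_i(f)$ is an isomorphism for $i\le k$, the truncation $\tau_{\le k}f$ is a quasi-isomorphism, and the same applies to all shifts $f[i]$, $i\ge 0$. Hence $[N[i],Z]\to[M[i],Z]$ is an isomorphism for all $i\ge 0$, i.e.\ $f^*$ induces isomorphisms on $\pi_0$ and on all higher homotopy groups at the zero vertex, and the additive structure of the mapping spaces upgrades this to a weak equivalence. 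With this adjustment your argument is a correct, more structural alternative to the paper's explicit lifting proof; what it buys is a reformulation as the standard $t$-structure vanishing, at the cost of having to treat the $\pi_0$/connectivity subtlety that the paper's lifting argument handles implicitly.
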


\begin{proof}
This is very similar to \cite[Proposition 1.5.2]{Hir03}. Without loss of
generality, let $f\colon M \to N$ be a cofibration of chain complexes, that
is, a monomorphism with degreewise projective cokernel.
We know that $f$ is a weak equivalence in $P_k\Ch_b(R)$ if and only if
\[
\map_{\Ch_b(R)}(N,Z) \longrightarrow \map_{\Ch_b(R)}(M,Z)
\]
is an acyclic fibration in simplicial sets for all $g_k$-local $Z$; see
\cite[Section 1.3.1]{Hir03}. This is equivalent to having a lift in the
diagram
\[
\xymatrix{ \partial\Delta[n] \ar[r]\ar[d] & \map_{\Ch_b(R)}(N,Z) \ar[d] \\
\Delta[n] \ar[r] \ar@{.>}[ur] & \map_{\Ch_b(R)}(M,Z)
}
\]
for all $n\ge0$. By adjunction, this is equivalent to having a lift in the
diagram
\[
\xymatrix{M \otimes \Delta[n] \coprod\limits_{M \otimes \partial\Delta[n]} N \otimes \partial\Delta[n] \ar[r] \ar[d] & Z \ar[d] \\
N \otimes \Delta[n] \ar[r] \ar@{.>}[ur] & 0
}
\]
for all $n \ge 0$.

Note that for a chain complex $A$, the complex $A \otimes \Delta[n]$ is the $n$th suspension of the cone of $A$, while $A \otimes \partial \Delta[n]$ is $A[n] \oplus A[n+1]$ (direct sum of chain complexes). Thus, the top left corner of this square is the $n$th suspension of the mapping cone of~$f$, and the left vertical map is given by $f \oplus \rm{id}$.
As $f$ was assumed to be a homology isomorphism in degrees $0$ to $k+1$, this mapping cone is acyclic in degrees $0$ to $k+1$ and the left vertical map is a homology isomorphism in those degrees (as the cone of a chain complex is obviously acyclic).  

We know by Proposition \ref{prop:ktypechaincx} that $H_j(Z)=0$ for $j\ge
k+1$. This means that
we have a square in $\Ch_b(R)$ where the left vertical map is a cofibration
and the right vertical map a fibration. In order to have the desired lift,
one of those maps would have to be a homology isomorphism.

As the left vertical map is a homology isomorphism in degrees $0$ to $k+1$,
we can use the methods in \cite[Section 7.7, proof of MC4(i)]{DS} to construct a lift in
those degrees. Then we can use the same method as in \cite[Section 7.5, proof of MC4(ii)]{DS}
to inductively construct the lift from degrees $k+2$ onwards, which uses that
$H_j(N)=0$ for $j \ge k+1$.

So we have constructed a lift in the above square, which means that $f\colon
M \to N$ is a weak equivalence in $P_k\Ch_b(R)$.
\end{proof}

As a consequence of Proposition \ref{prop:ktypechaincx} and Proposition
\ref{prop:chaincxequivalence} we get the following.
\begin{corollary}
If $M$ is a chain complex in $\Ch_b(R)$, then the $W_k$-localisation is given
by the $k$-truncation $\tau_{\ge k} M$ of $X$, defined by
\[
(\tau_{\ge k} M)_n= \left\{
\begin{array}{ccc}
M_n &\mbox{ if } & n <k, \\
M_k/B_k &\mbox{ if } & n=k, \\
 0 & \mbox{ if } & n>k,
 \end{array}
 \right.
\]
where $B_k={\rm im}(d_k)$ denotes the group of $k$-boundaries. \qed
\end{corollary}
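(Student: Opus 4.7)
The plan is to exhibit $\pi\colon M \to \tau_{\ge k} M$ as a fibrant replacement of $M$ in $P_k\Ch_b(R)$, which is exactly the content of the corollary. Two checks are required: that $\tau_{\ge k} M$ is a $k$-type (i.e.\ fibrant in $P_k\Ch_b(R)$), and that the canonical projection $\pi$ is a weak equivalence in $P_k\Ch_b(R)$. Both checks reduce to the preceding two propositions.

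First I would write down the canonical map $\pi\colon M \to \tau_{\ge k} M$ explicitly: the identity in degrees $n < k$, the quotient $M_k \twoheadrightarrow M_k/B_k$ in degree $k$, and zero in degrees $n > k$. It is a chain map: the only nontrivial compatibility is at $n = k+1$, where commuting with the differential requires $d_{k+1}(M_{k+1}) \subseteq B_k$, which holds by the very definition $B_k = d_{k+1}(M_{k+1})$. The object $\tau_{\ge k} M$ is automatically fibrant in $\Ch_b(R)$, since in the projective model structure on bounded chain complexes every object is fibrant (fibrations are required only to be surjective in positive degrees). Because $(\tau_{\ge k} M)_n = 0$ for $n > k$, the homology vanishes identically in degrees above $k$, so by Proposition \ref{prop:ktypechaincx} the chain complex $\tau_{\ge k} M$ is a $k$-type.

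Finally, to apply Proposition \ref{prop:chaincxequivalence} I need to verify that $H_i(\pi)$ is an isomorphism for $0 \le i \le k$. For $i < k-1$ the source and target complexes agree in the relevant degrees, so $H_i(\pi)$ is the identity. For $i = k-1$ the cycle groups $Z_{k-1}$ agree, and the boundary groups coincide because $\bar d_k\colon M_k/B_k \to M_{k-1}$ is induced by $d_k$ and therefore has the same image; hence $H_{k-1}(\pi)$ is the identity. The main computation is at $i = k$: since $B_k \subseteq Z_k$ (from $d_k d_{k+1} = 0$) the projection restricts to a map $Z_k \to \ker(\bar d_k)$ that is surjective with kernel $B_k$, giving the identification
\[
H_k(\tau_{\ge k} M) \;=\; \ker(\bar d_k) \;=\; Z_k/B_k \;=\; H_k(M),
\]
under which $H_k(\pi)$ becomes the identity. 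This $H_k$-computation is the only point requiring care, but it is a direct consequence of the definition of $\tau_{\ge k}$ as the quotient by $B_k$ rather than a serious obstacle. Proposition \ref{prop:chaincxequivalence} then yields that $\pi$ is a weak equivalence in $P_k\Ch_b(R)$, and combined with the preceding paragraph this identifies $\tau_{\ge k} M$ with the $W_k$-localisation of $M$.
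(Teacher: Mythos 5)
Your proposal is correct and follows exactly the route the paper intends: the corollary is stated as an immediate consequence of Propositions \ref{prop:ktypechaincx} and \ref{prop:chaincxequivalence}, i.e.\ $\tau_{\ge k}M$ is a fibrant object with vanishing homology above degree $k$ (hence a $k$-type) and the projection is a homology isomorphism in degrees $\le k$ (hence a $P_k$-equivalence), so it is a fibrant replacement in $P_k\Ch_b(R)$. Your reading of $B_k$ as $\mathrm{im}(d_{k+1})$, the $k$-boundaries, is the correct one for the homology computation to go through, and your verification at $i=k$ and $i=k-1$ is exactly the needed check.
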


\end{document}